\documentclass[10pt, a4paper]{article} 
\author{Rafael Granero-Belinch\'on
\\{\footnotesize Departamento de Matem\'aticas, Estad\'istica y Computaci\'on}
\\{\footnotesize Universidad de Cantabria}
\\{\footnotesize Santander, Espa\~na}
\\{\footnotesize email: {\it rafael.granero@unican.es}}
\and 
Stefano Scrobogna
\\{\footnotesize Basque Center for Applied Mathematics}
\\{\footnotesize Alameda de Mazarredo, 14}
\\{\footnotesize Bilbao, Espa\~na}
\\{\footnotesize email: {\it sscrobogna@bcamath.org}}
}
\usepackage[english]{babel}
\usepackage{pst-grad} 
\usepackage{pst-plot} 
\usepackage{pstricks}
\usepackage{amsmath,amssymb,mathrsfs,amsthm, mathtools}
\usepackage{tikz} 
\usepackage{mathcomp,wasysym}  
\usepackage{graphicx}  
\usepackage[all]{xy} \xyoption{arc} \xyoption{color}
\usepackage{epsfig}
\usepackage{cite}
\usepackage[a4paper,
left=2.5cm, right=2.5cm,
top=3cm, bottom=3cm]{geometry}
\usepackage[a4paper]{geometry}
\geometry{
 a4paper,
 total={170mm,257mm},
 left=20mm,
 right=20mm,
 top=20mm,
 bottom=20mm
 }

\usepackage{empheq}
\usepackage{fourier}
\DeclareMathAlphabet{\mathcal}{OMS}{cmsy}{m}{n}

\newcommand{\dx}{\textnormal{d}{x}}
\newcommand{\dt}{\textnormal{d}{s}}
\newcommand{\ddt}{\frac{\textnormal{d}}{ \textnormal{d}{t}}}

\newcommand{\sgn}{\textnormal{sgn}}
\newcommand{\pare}[1]{\left( #1 \right)}
\newcommand{\norm}[1]{\left\| #1 \right\|}
\newcommand{\av}[1]{\left| #1 \right|}
\newcommand{\bra}[1]{\left[ #1 \right]}
\newcommand{\set}[1]{\left\{ #1 \right\}}

\newcommand{\sumf}{\sum_{\left| q-q' \right|\leqslant4}}
\newcommand{\sumi}{\sum_{q'>q-4}}
\newcommand{\tqS}{\triangle_q}
\newcommand{\Sq}{S_q}
\newcommand{\tQS}{\triangle_{q'-1}}
\newcommand{\SQ}{S_{q'-1}}
\newcommand{\tv}{\triangle_q}
\newcommand{\Tv}{\triangle_{q'}}

\newcommand{\Hs}{H^s}

\newcommand{\cC}{\mathcal{C}}

\newcommand{\cF}{\mathcal{F}}
\newcommand{\cJ}{\mathcal{J}}
\newcommand{\cD}{\mathcal{D}}

\newcommand{\bR}{\mathbb{R}}
\newcommand{\bT}{\mathbb{T}}
\newcommand{\bZ}{\mathbb{Z}}

\newcommand{\cH}{\mathcal{H}}

\newcommand{\cN}{\mathcal{N}}
\newcommand{\cM}{\mathcal{M}}


\newcommand{\pat}{\partial_t}

\normalsize
\normalsize
\setlength{\parindent}{0pt}


\usepackage[bookmarks,colorlinks]{hyperref}

\newcommand{\hra}{\hookrightarrow}

\def\comm#1#2{{\left\llbracket#1,#2\right\rrbracket}}

\theoremstyle{theorem}
\newtheorem{theorem}{Theorem}[section]
\newtheorem*{theorem*}{Theorem}
\newtheorem{prop}[theorem]{Proposition}
\newtheorem{lemma}[theorem]{Lemma}

\theoremstyle{definition}

\newtheorem{rem}[theorem]{Remark}

\numberwithin{equation}{section}

\title{On an asymptotic model for free boundary Darcy flow in porous media}


\begin{document}
\maketitle

\begin{abstract}
We provide a rigorous mathematical study of an asymptotic model describing Darcy flow with free boundary in a small amplitude/large wavelength approximation. In particular, we prove several well-posedness results in critical spaces. Furthermore, we also study how the solution decays towards the flat equilibrium.
\end{abstract}

{\small
\tableofcontents}



\allowdisplaybreaks
\section{Introduction}
Since the pioneer works of Boussinesq \cite{Bo1872}, the derivation and study of asymptotic models for free boundary flows (usually, the water waves problem) are a hot research area (the interested reader can refer to \cite{lannes2013water}).

In this paper we study an asymptotic model for the intrusion of water into oil sand. This is known as the Muskat problem \cite{Musk}. The study of the (full) Muskat problem has received a lot of attention in the last years, and, as a consequence, there is a large literature available (we refer to \cite{GL} for a recent survey of the available results). More precisely, this work considers the following one-dimensional equation
\begin{equation}\label{eq:AD}\tag{AD$ _\nu $}
\left\lbrace
\begin{aligned}
&
\pat f=-\nu \Lambda^3 f-\Lambda f+\partial_{x} \left(\comm{\mathcal{H}}{f}\left(\nu \Lambda^3 f+\Lambda f\right)\right),
\\
& \left. f \right|_{t=0} = f_0
\end{aligned}
\right. 
\end{equation}
where $\comm{A}{B}=AB-BA$ denotes the commutator between $A$ and $B$, the Bond number $ \nu \geq 0 $ represents the ratio between capillarity and gravitational effects. and the Hilbert transform $\mathcal{H}$ and the Calderon operator $ \Lambda $ are defined as the following Fourier multiplier operators
\begin{align}\label{Hilbert}
\widehat{\mathcal{H}f}(n)=-i\text{sgn}(n) \hat{f}(n) , &&
\widehat{\Lambda f}(n)&=|n| \ \hat{f}(n)\,.
\end{align}
Equation \eqref{eq:AD} was derived by the authors as an asymptotic model for the Darcy flow in porous media under the assumption that the amplitude over the wavelength, a quotient known as \emph{steepness}, is small \cite{GrScr} (see also \cite{CGSW, GSdww, GS2}). In the gravity driven case (when $ \nu =0 $) the equation \eqref{eq:AD} reads as
\begin{equation}\label{eq:AD0}\tag{AD$ _0 $}
\left\lbrace
\begin{aligned}
&\partial_t f +\Lambda f = \Lambda\pare{f\ \Lambda f} + \partial_x \pare{f\ \partial_x f}, \\
& \left.f\right|_{t=0} = f_0.
\end{aligned}
\right. 
\end{equation}
Although \eqref{eq:AD} and \eqref{eq:AD0} seem semilinear fourth and second order nonlocal PDE respectively (due to terms like $f\partial_x^4 f$ and $f\partial_x^2 f$), the commutator structure of the nonlinearity implies that they are a quasilinear third and a fully nonlinear first order nonlocal PDE respectively (see Lemma \ref{lemma1} below).

There are several motivations to study asymptotic models of free boundary Darcy flow. One of them arises from computational reasons. The idea is then, to simulate the asymptotic model to obtain an accurate description of the full problem at a lower computational cost. In that regards, let us briefly emphasize that the one-phase Muskat problem reveals itself as somehow harder (computationally speaking) than the two-phase Muskat problem. Let us try to briefly explain the reasons for this ``\emph{paradoxical}" fact. In the case when there are two fluids, the gravity driven Muskat problem reads as the following single nonlocal pde \cite{c-g07}
$$
\partial_t f=\text{p.v.}\int_\bR \frac{ \pare{ \partial_x f(x)-\partial_x f(x-y)}y}{ \pare{ f(x)-f(x-y)}^2+y^2}dy.
$$
Simplified models for this case where provided (following heuristic ideas) by C\'ordoba, Gancedo \& Orive \cite{c-g-o08} (see also \cite{GNO}). Remarkably, when the one phase Muskat problem is considered, the previous pde has to be modified and one is forced to study the following system of a nonlocal pde and an integral equation \cite{c-c-g10}
\begin{align*}
\partial_t f(x)&=\text{p.v.}\int_\bR \varpi(x-y)\frac{y}{y^2+(f(x)-f(x-y))^2}dy-\partial_x f(x)\text{p.v.}\int_\bR \varpi(x-y)\frac{f(x)-f(x-y)}{y^2+(f(x)-f(x-y))^2}dy,\\
-\partial_x f(x)&=\text{p.v.}\int_\bR \varpi(\beta) \mathcal{B} (x,f(x),\beta,f(\beta))d\beta\cdot (1,\partial_x f (x))+\frac{\varpi(x)}{2},
\end{align*}
where $\mathcal{B} $ denotes the kernel of $\nabla^\perp\Delta^{-1}$, \emph{i.e.}
$$
\mathcal{B} (x_1,x_2,y_1,y_2)=\left(-\frac{x_2-y_2}{(x_2-y_2)^2+(x_1-y_1)^2}, \frac{x_1-y_1}{(x_2-y_2)^2+(x_1-y_1)^2}\right).
$$
Thus, to write the amplitude of the vorticity in terms of the interface, one needs to invert an operator as in C{\'o}rdoba,  C{\'o}rdoba, \& Gancedo \cite{c-c-g10}. This is, mathematically and computationally, a challenging issue.

Another reason is the possibility of finding new finite time singularity scenarios. In particular, for the two-phase Muskat problem, Castro, C\'ordoba, Fefferman, Gancedo \& L\'opez-Fern\'andez \cite{ccfgl} proved the existence of \emph{turning waves}, \emph{i.e.} interfaces that can be parametrized as a smooth graphs at time $t=0$ but that become smooth curves that cannot be parametrized as graphs after a finite time (see also \cite{CGO, BCG, GG, cordoba2015note, cordoba2017note}). We observe that these turning waves are interfaces such that there exists $0<T_{1}<\infty$ and
$$
\limsup_{t\rightarrow T_{1}} \|\partial_x f(t)\|_{L^\infty}=\infty.
$$ 
In the case of the one-phase Muskat problem, Castro, C\'ordoba, Fefferman, Gancedo \cite{ccfgonephase} proved the existence of curves that self intersect in finite time in what is called a splash singularity (see also \cite{gancedo2014absence, coutand2016impossibility, FIL, cponephase}. It remains as an interesting open problem whether the Muskat problem can have a \emph{cusp} singularity, \emph{i.e.} a singularity where the slope and curvature of the interface blows up while the interface remains a graph (see \cite{ccgs-10, G, CGS, GS, CGSVfiniteslope, cameron2017global} for global existence results). In that regards, we are optimistic enough to think that such an scenario should be simpler to prove (or discard) in an asymptotic model rather than the full problem. Taking this into consideration, the results of this paper then give conditions that excludes finite time blow ups of turning type.

\subsection{Functional spaces}
The space domain considered in the present article is the one-dimensional torus. i.e. $\bT = \left. \bR \big. \right/ 2\pi \ \bZ. $ The domain $ \bT $ can also be understood as the interval $ \bra{-\pi, \pi} $ endowed with periodic boundary conditions. Let $f(x)$ denote a $L^2$ function on $\bT$. Then, its Fourier transform is an $ \ell^2\pare{\bZ} $ sequence defined as
$$
\hat{f}{(n)} = \hat{f}_n=\frac{1}{\sqrt{2\pi}}\int_\bT f(x)e^{-ix \ n}\dx,
$$
for any $ n\in \bZ $, 
with inverse Fourier transform
$$
f(x)=\frac{1}{\sqrt{2\pi}}\sum_{n\in \bZ} \hat{f}{(n)} \ e^{i  x \ n}.
$$
Then, we define the $L^2-$based (homogeneous) Sobolev spaces $\dot{H}^\alpha(\bT)$
\begin{equation*}\label{Sobhomo}
\dot{H}^\alpha(\bT)=\left\{u(x)\text{ such that } \|u\|_{\dot{H}^\alpha(\bT)}^2:=\sum_{k} \av{k}^{2\alpha}\av{\hat{u}{(k)}}^2<\infty
\right\},
\end{equation*}

In the present work we will need to define Sobolev spaces with fractional derivatives and integrability indexes different from two, i.e. spaces $ \dot{W}^{s, p} \pare{\bT} $ for $ s\in \bR $ and $ p\in \bra{1, \infty} $. We provide here a characterization of such spaces using using the theory of Littlewood-Paley (see Appendix \ref{elements LP}). If we consider the dyadic block $ \tqS $ we can hence define the semi-norm
\begin{equation*}
\norm{u}_{\dot{W}^{s, p}\pare{\bT}} = \pare{\sum_{q\in \bZ} 2^{pqs}\norm{\tqS u}_{L^p\pare{\bT}}^p}^{1/p}. 
\end{equation*}
{It is well-known that
\begin{equation*}
\norm{u}_{\dot{W}^{s, 2}\pare{\bT}} \sim \norm{u}_{\dot{H}^{s}\pare{\bT}}, 
\end{equation*}
in the sense that there exists a positive absolute constant $ C> 0 $ s.t. $ \frac{1}{C}\norm{u}_{\dot{H}^{s}\pare{\bT}} \leqslant \norm{u}_{\dot{W}^{s, 2}\pare{\bT}} \leqslant C \norm{u}_{\dot{H}^{s}\pare{\bT}} $.}

Let us recall the following embedding, valid for mean-free distributions on the one-dimensional torus:
\begin{equation*}
\dot{H}^{1/2}\pare{\bT}\hra L^p\pare{\bT}, \ \forall \ p\in\left[1, \infty \right), 
\end{equation*}
and in particular the following inequality holds true for any $ u\in \dot{H}^{1/2} \cap L^p $, 
\begin{equation*}
\norm{u}_{L^p}\leqslant C \sqrt{p} \norm{u}_{H^{1/2}}. 
\end{equation*}
 We also recall that $ \Hs\pare{\bT} $ for $ s>1/2 $ embeds continuously in $ L^\infty\pare{\bT} $ with bound
\begin{equation}\label{eq:Sobolev_emb_with_constant}
\norm{u}_{L^\infty} \leqslant \frac{C}{\eta} \norm{u}_{H^{\frac{1}{2} + \eta}}. 
\end{equation}

Similarly, we define the (homogeneous) Wiener spaces $\dot{A}^\alpha(\bT)$ and Wiener spaces with weight as
\begin{equation*}\label{Wienerhomo}
\dot{{A}}^\alpha(\bT)=\left\{u(x)\text{ such that } \|u\|_{\dot{A}^\alpha(\bT)}:=\sum_{k} \av{k}^{\alpha}\av{\hat{u}{(k)}}<\infty\right\}.
\end{equation*}
\begin{equation*}\label{WienerhomoW}
\dot{{A}}^\alpha_\nu(\bT)=\left\{u(x)\text{ such that } \|u\|_{\dot{A}_\nu^\alpha(\bT)}:=\sum_{k} \av{k}^{\alpha}e^{\nu |k|}\av{\hat{u}{(k)}}<\infty\right\},
\end{equation*}
respectively. Obviously, $\dot{A}^\alpha=\dot{A}^\alpha_0$ and the functions in $A_\nu^s$ are analytic in the complex strip
$$
\mathbb{S}_\nu=\left\{x+iy,\;|y|<\nu\right\}.
$$

We observe that the equation \eqref{eq:AD} conserves the average of a solution $ f $. Thus, without losing generality, we can assume this average to be zero. Thus, from this point onwards we identify the spaces $ \dot{H}^s\pare{\bT} $ and $ {H}^s\pare{\bT} $,  $ \dot{A}^s\pare{\bT} $ and $ {A}^s\pare{\bT} $ and $\dot{A}^s_\nu\pare{\bT} $ and $ {A}^s_\nu\pare{\bT} $. Then, the previously defined semi-norms when restricted to average-free functions are in fact genuine norms.

\subsection{A word on scaling}\label{sec:scaling}
We observe that, when $\nu=0$, equation \eqref{eq:AD} is left invariant by the scaling
\begin{equation}\label{eq:scaling}
f_\lambda (x,t)=\frac{1}{\lambda}f(\lambda x,\lambda t).
\end{equation}
Remarkably, this scaling is the same as in the full Muskat problem (see \cite{CL}). Then, we note that the following spaces are critical for this scaling
$$
L^\infty \pare{ 0,T;\dot{A}^1(\bT)},\;
L^\infty \pare{ 0,T;\dot{H}^{3/2}(\bT)},\;
L^\infty  \pare{0,T;\dot{W}^{1,\infty}(\bT)}.
$$

Let us now consider the equation \eqref{eq:AD} for $ \nu >0 $. In such setting \eqref{eq:AD} reads, when expanded, as
\begin{equation}
\partial_t f + \nu \Lambda^3 f + \Lambda f =  \nu \bra{ \Big. \Lambda\pare{f\ \Lambda^3 f} - \partial_x \pare{f\ \partial_x^3 f}} + \bra{ \Big. \Lambda\pare{f\ \Lambda f} + \partial_x \pare{f\ \partial_x f}}. \label{eq:AD_nu>0}
\end{equation}
There is no scale invariance satisfied by the equation \eqref{eq:AD_nu>0}. Despite this fact we can rewrite \eqref{eq:AD_nu>0} as
\begin{equation}\label{eq:AD_nu>0_1}
\frac{1}{2} \partial_t f + \nu \Lambda^3 f - \nu { \Lambda\pare{f\ \Lambda^3 f} +  \nu\partial_x \pare{f\ \partial_x^3 f}} = 
-\frac{1}{2} \partial_t f- \Lambda f +\Lambda\pare{f\ \Lambda f} + \partial_x \pare{f\ \partial_x f}
,
\end{equation}
and then study separately the homogeneity of the left- and right-hand-side of \eqref{eq:AD_nu>0_1}. Concerning the right-hand-side the deductions of above still hold and it invariant with respect to the scaling \eqref{eq:scaling}. The left-hand-side of \eqref{eq:AD_nu>0_1} is invariant with respect to the scaling 
\begin{equation}\label{eq:scaling_nu>0}
f_\lambda \pare{x, t} = \frac{1}{\lambda} f \pare{\lambda x, \lambda^3 t}.
\end{equation}
We remark that the space $ L^\infty_t\pare{\bR_+} $ is invariant respect both the dilations $ t\mapsto \lambda t $ and $ t\mapsto \lambda^3 t $ which characterize respectively \eqref{eq:scaling} and \eqref{eq:scaling_nu>0}. Then, the spaces 
\begin{equation*}
L^\infty\pare{0, T ; \dot{H}^{3/2}\pare{\bT}},\quad L^\infty\pare{0, T ; \dot{A}^{1}\pare{\bT}} 
\end{equation*} 
are, in a certain sense relatively to scale invariance, reasonable critical spaces to consider for the equation \eqref{eq:AD_nu>0}. 

\section{Main results \& Discussion}
\subsection{Results for the gravity driven case \eqref{eq:AD0}}

Our first result proves the existence of solution for analytic initial of arbitrary size.
\begin{theorem}\label{thm:analytic}
Let $f_0\in A^1_1(\mathbb{T})$ be the initial data for \eqref{eq:AD0}. Then there exist a short enough time $T$,
$$
0<T\leq \frac{1}{4\|f_0\|_{A^1_1}}
$$
and a unique mild solution to \eqref{eq:AD0} 
$$ 
f \in L^\infty \pare{0,T;A^1_1}\cap C \pare{[0,T],A^1_{0.5}}.
$$ 
\end{theorem}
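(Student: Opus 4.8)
The plan is to pass to the Fourier side, isolate the cancellation hidden in the commutator, and close the estimate in the analytic Wiener spaces $A^s_\nu$ with a radius $\nu$ that is allowed to shrink linearly in time, in the spirit of the Foias--Temam and Kukavica--Vicol method. The starting point is the algebraic identity behind Lemma \ref{lemma1}: writing $\widehat{uv}(n)=\tfrac{1}{\sqrt{2\pi}}\sum_{k\in\bZ}\hat u(n-k)\hat v(k)$, $\widehat{\Lambda u}(n)=|n|\hat u(n)$ and $\widehat{\partial_x u}(n)=in\,\hat u(n)$, one computes
\begin{equation}\label{eq:plan-symbol}
\widehat{\Lambda(f\,\Lambda f)+\partial_x(f\,\partial_x f)}(n)=\frac{1}{\sqrt{2\pi}}\sum_{k\in\bZ}\bigl(|n|\,|k|-nk\bigr)\hat f(n-k)\hat f(k)=\frac{2}{\sqrt{2\pi}}\sum_{\substack{k\in\bZ\\ nk<0}}|n|\,|k|\,\hat f(n-k)\hat f(k),
\end{equation}
since $|n|\,|k|-nk$ vanishes whenever $nk\ge0$. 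In the surviving regime $nk<0$ one has $|n-k|=|n|+|k|$, hence $|n|\le|n-k|$ and $e^{\mu|n|}\le e^{\mu|n-k|}$ for every $\mu\ge0$; inserting this into \eqref{eq:plan-symbol} yields the bilinear bound
\begin{equation}\label{eq:plan-nl}
\norm{\Lambda(f\,\Lambda f)+\partial_x(f\,\partial_x f)}_{A^1_\mu}\le c_0\,\norm{f}_{A^2_\mu}\,\norm{f}_{A^1_\mu},\qquad c_0=\sqrt{2/\pi},
\end{equation}
for every $\mu\ge0$ (the leftover factor $e^{-\mu|k|}\le1$ is harmless). Thus the nonlinearity loses \emph{exactly} one derivative; since the dissipation $-\Lambda f$ is itself of order one, this loss is critical and cannot be recovered from the smoothing of the semigroup $e^{-t\Lambda}$ in a Duhamel or Picard scheme (one only gets a logarithmically divergent time integral), which is exactly why a shrinking analyticity strip is used.

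For the a priori estimate (which I carry out on smooth approximations, say finite Fourier sums, so that all the norms below are finite and the manipulations legitimate) set $M=\norm{f_0}_{A^1_1}$, $T=\tfrac1{4M}$ and $\nu(t)=1-2Mt$, so that $\nu(0)=1$ and $\tfrac12\le\nu(t)\le1$ on $[0,T]$. The map $t\mapsto\norm{f(t)}_{A^1_{\nu(t)}}=\sum_n|n|e^{\nu(t)|n|}|\hat f(n,t)|$ is absolutely continuous, and from $\ddt|\hat f(n)|\le-|n|\,|\hat f(n)|+|\widehat{N(f)}(n)|$ (where $N(f)$ is the right-hand side of \eqref{eq:AD0} with the linear term $-\Lambda f$ removed) together with \eqref{eq:plan-nl} one obtains
\begin{equation}\label{eq:plan-apriori}
\ddt\norm{f}_{A^1_{\nu(t)}}\le\Bigl(\dot\nu(t)-1+c_0\norm{f}_{A^1_{\nu(t)}}\Bigr)\norm{f}_{A^2_{\nu(t)}}.
\end{equation}
Since $c_0<2$, as long as $\norm{f(t)}_{A^1_{\nu(t)}}\le M$ the bracket is at most $-2M-1+c_0M<0$, so $t\mapsto\norm{f(t)}_{A^1_{\nu(t)}}$ does not increase; a continuity/bootstrap argument upgrades this to $\norm{f(t)}_{A^1_{\nu(t)}}\le M$ throughout $[0,T]$. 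This moving-radius bound is the a priori estimate behind the statement: it fixes the lifespan $T\le\tfrac1{4\norm{f_0}_{A^1_1}}$ (the time at which $\nu$ would drop to $\tfrac12$) and keeps $f$ bounded in $A^1_{\nu(t)}$, and in particular in $A^1_{1/2}$, on $[0,T]$.

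To make this rigorous I would use a Fourier--Galerkin scheme: projecting \eqref{eq:AD0} onto the frequencies $|n|\le N$ gives an ODE system to which the computation leading to \eqref{eq:plan-apriori} applies verbatim (the truncation merely restricts the sum in \eqref{eq:plan-symbol} and keeps the sign condition $nk<0$), so the approximations $f^N$ are globally defined and satisfy $\norm{f^N(t)}_{A^1_{\nu(t)}}\le M$ on $[0,T]$ uniformly in $N$. This uniform bound gives equiboundedness and, through the equation, equicontinuity in $t$ of each Fourier coefficient; Arzel\`a--Ascoli and a diagonal extraction then produce a limit $f$ solving \eqref{eq:AD0} in Duhamel form with the same bound, and dominated convergence in the Duhamel formula gives $f\in C([0,T],A^1_{0.5})$. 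Uniqueness follows from the $A^0$-level analogue applied to the difference $h=f-g$ of two solutions with the same datum: $h$ solves $\pat h+\Lambda h=\mathcal B(h,f)+\mathcal B(g,h)$ with $\mathcal B(u,v):=\Lambda(u\,\Lambda v)+\partial_x(u\,\partial_x v)$, and the estimate $\norm{\mathcal B(u,v)}_{A^0_\mu}\le c_0\norm{u}_{A^1_\mu}\norm{v}_{A^1_\mu}$ (proved exactly as \eqref{eq:plan-nl}), together with a radius $\mu(t)$ that decreases fast enough relative to the sizes of $f$ and $g$, gives $\ddt\norm{h}_{A^0_{\mu(t)}}\le0$; since $h|_{t=0}=0$ this forces $h\equiv0$.

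The heart of the matter is the derivative loss in \eqref{eq:plan-nl}: the whole scheme closes only because this loss is compensated \emph{on the nose} by the term $\dot\nu(t)\norm{f}_{A^2_{\nu(t)}}$ coming from the shrinking strip, and one has to keep explicit track of the constants (in particular $c_0<2$) both to reach the sharp lifespan $\tfrac1{4\norm{f_0}_{A^1_1}}$ and to ensure that the analyticity strip has not collapsed at time $T$. The remaining points (justifying the differentiation of the $\ell^1$-type norm, the compactness, and passing to the limit in the nonlinearity) are routine once the Galerkin truncation is set up.
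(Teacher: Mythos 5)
Your bilinear estimate and the differential inequality for $t\mapsto\norm{f(t)}_{A^1_{\nu(t)}}$ are correct (the cancellation $|n||k|-nk=0$ for $nk\ge0$ and the identity $|n-k|=|n|+|k|$ in the surviving regime are exactly the right observations, and the constant bookkeeping is fine). But the shrinking-strip method proves a strictly weaker statement than the one claimed. Your a priori bound controls $\norm{f(t)}_{A^1_{\nu(t)}}$ with $\nu(t)=1-2Mt$, hence only gives $f\in L^\infty(0,T;A^1_{1/2})$; it gives no control whatsoever on $\norm{f(t)}_{A^1_1}$ for $t>0$. The theorem asserts $f\in L^\infty(0,T;A^1_1)$, i.e.\ that the solution \emph{retains the full analyticity strip of the datum}, and the paper's remark immediately after the statement makes clear that this is precisely the point of the result. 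Your scheme cannot be repaired by taking $\dot\nu=0$: with a fixed radius the bracket in your inequality \eqref{eq:plan-apriori} becomes $-1+c_0\norm{f}_{A^1_1}$, which is negative only for small data, whereas the theorem is for data of arbitrary size. So as written the proposal does not establish the $L^\infty(0,T;A^1_1)$ part of the conclusion, which is the nontrivial part.

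The paper reaches the stronger conclusion by a different device: an Oseen-type power series $f=\lambda\sum_\ell\lambda^\ell f^{(\ell)}$, where each $f^{(\ell)}$ solves a linear problem forced by the previous terms and is estimated in $A^1_{\nu(\ell)}$ with a weight $\nu(\ell)=R-\ell+1\ge1$ that never drops below the original radius for the terms actually summed. The smallness needed to sum the series comes not from shrinking the strip but from the recursion $\mathscr{A}_\ell\le\sum_{j}\int_0^t\mathscr{A}_j\mathscr{A}_{\ell-j-1}$, whose solution is controlled by Catalan numbers, giving a geometric series in $4\lambda t$ and hence the lifespan $T\le 1/(4\norm{f_0}_{A^1_1})$. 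The price is that the resulting solution is only $L^\infty$ in time with values in $A^1_1$ (continuity is recovered only in the smaller space $A^1_{0.5}$), which is why the theorem is phrased the way it is. If you want to keep your approach, you should either (i) restate the conclusion as $f\in L^\infty(0,T;A^1_{\nu(t)})$ with a shrinking $\nu$, which is not what is being asked, or (ii) switch to the power-series construction. The rest of your outline (Galerkin truncation, compactness, continuity via Duhamel, and uniqueness via a weighted $A^0_{\mu(t)}$ Gronwall argument with a rapidly decreasing $\mu$) is sound and would carry over to either construction.
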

This result is a sort of Cauchy-Kovalevsky Theorem. However, we emphasize that the solution is less regular in time (merely $L^\infty$ instead of continuous) but maintains its original strip of analyticity.

Furthermore, we are able to prove a result establishing the decay of certain norms of the solution:
\begin{prop}\label{thm:Decay}
Assume that the solution to \eqref{eq:AD0} satisfies 
$$
\sup_{0\leq t\leq T}\norm{f(t)}_{\dot{A}^1}\leqslant 1,
$$
then 
$$
\sup_{0\leq t\leq T}\max_{y}f(y,t)\leqslant \max_{y}f_0(y),
$$ 
$$
\inf_{0\leq t\leq T}\min_{y}f(y,t)\geq \min_{y}f_0(y),
$$
and, as a consequence,
$$
\sup_{0\leq t\leq T}\norm{f(t)}_{L^\infty}\leqslant \|f_0\|_{L^\infty}.
$$
\end{prop}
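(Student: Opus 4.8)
The plan is to prove a pointwise maximum and minimum principle for \eqref{eq:AD0}, after rewriting its right-hand side as a single singular integral with a sign-definite structure. Recall the pointwise representation of the Calder\'on operator on the torus,
\[
\Lambda g(x) = \textnormal{p.v.}\int_{\bT} K(y)\pare{g(x) - g(x-y)}\,\textnormal{d}y, \qquad K(y) = \frac{1}{4\pi\sin^2(y/2)}>0,
\]
of which only the positivity and evenness of $K$ will be used, together with $\Lambda^2 = -\pa_x^2$. With these, I would first establish the algebraic identity, valid for smooth $f$,
\[
-\Lambda f + \Lambda\pare{f\,\Lambda f} + \pa_x\pare{f\,\pa_x f}
= \pare{\pa_x f}^2 + \textnormal{p.v.}\int_{\bT} K(y)\pare{f(x) - f(x-y)}\pare{\Lambda f(x-y) - 1}\,\textnormal{d}y .
\]
This is obtained by expressing $-\Lambda f + \Lambda(f\Lambda f)$ as one principal value, using $f(x)\Lambda f(x) - f(x-y)\Lambda f(x-y) = f(x)\pare{\Lambda f(x) - \Lambda f(x-y)} + \pare{f(x) - f(x-y)}\Lambda f(x-y)$, pulling $f(x)$ out of the first group to recognise $f(x)\,\Lambda^2 f(x) = -f(x)\pa_x^2 f(x)$, and cancelling this against the same term appearing in $\pa_x(f\,\pa_x f) = (\pa_x f)^2 + f\,\pa_x^2 f$. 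Consequently every sufficiently smooth solution of \eqref{eq:AD0} satisfies
\begin{equation}\label{eq:proofproposal_star}
\pat f(x,t) = \pare{\pa_x f(x,t)}^2 + \textnormal{p.v.}\int_{\bT} K(y)\pare{f(x,t) - f(x-y,t)}\pare{\Lambda f(x-y,t) - 1}\,\textnormal{d}y .
\end{equation}

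Next I would use the hypothesis. Since $\norm{\Lambda f(t)}_{L^\infty}\leqslant \norm{f(t)}_{\dot{A}^1}\leqslant 1$, we get $\Lambda f(x,t) - 1\leqslant 0$ for every $x\in\bT$ and $t\in[0,T]$. Let $M(t)=\max_y f(y,t)$ and choose $x_t$ with $f(x_t,t)=M(t)$; then $\pa_x f(x_t,t)=0$, so in \eqref{eq:proofproposal_star} the quadratic term drops and the principal value becomes an absolutely convergent integral, the factor $f(x_t,t)-f(x_t-y,t)$ being $O(y^2)$ near $y=0$. Because $f(x_t,t)-f(x_t-y,t)=M(t)-f(x_t-y,t)\geqslant 0$, $K>0$ and $\Lambda f(x_t-y,t)-1\leqslant 0$, the integrand is $\leqslant 0$, hence $\pat f(x_t,t)\leqslant 0$. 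By the standard lemma on differentiation of the maximum (equivalently, the upper right Dini derivative satisfies $D^+M(t)\leqslant 0$, since any limit point of the maximisers $x_{t+s}$ as $s\to 0^+$ is a maximiser of $f(\cdot,t)$), $M$ is non-increasing, which gives the first claim. The identical argument at a minimiser $x_t^-$, where $\pa_x f(x_t^-,t)=0$ and now $m(t)-f(x_t^--y,t)\leqslant 0$ while $\Lambda f(x_t^--y,t)-1\leqslant 0$, yields $\pat f(x_t^-,t)\geqslant 0$, so $m(t)=\min_y f(y,t)$ is non-decreasing, which is the second claim. Since $f$ is mean-free, $\norm{f(t)}_{L^\infty}=\max\pare{M(t),-m(t)}\leqslant \max\pare{M(0),-m(0)}=\norm{f_0}_{L^\infty}$, which is the third claim.

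The genuinely delicate point is the regularity bookkeeping rather than the algebra: to evaluate \eqref{eq:proofproposal_star} pointwise and to apply the differentiation-of-maximum lemma one needs the solution to be, say, $C^1$ in time and $C^2$ in space; for the solutions furnished by the well-posedness results of this paper such regularity is available, and the statement for a general solution with $\sup_t\norm{f(t)}_{\dot{A}^1}\leqslant 1$ follows by approximation. One also has to check that the principal value in \eqref{eq:proofproposal_star} is well defined in general — the leading $y^{-1}$ singularity is odd and integrates to zero in the principal-value sense — and that it degenerates to an absolutely convergent integral exactly at the extremal points, which is precisely what lets the sign of the integrand control the evolution of $M(t)$ and $m(t)$.
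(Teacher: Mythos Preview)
Your proof is correct and follows essentially the same strategy as the paper: derive the singular-integral representation of the right-hand side (the paper's Lemma~\ref{lemma1}), then exploit the sign structure at extremal points via the differentiation-of-maximum lemma together with the bound $|\Lambda f|\leqslant\|f\|_{\dot A^1}\leqslant 1$. Your derivation of the key identity is more direct than the paper's (which works through the Hilbert-transform kernel and several integrations by parts), but the core argument is identical.
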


This result seems the analog of the maximum principles known for the full Muskat problem \cite{c-g09}. In particular, when compared to the results in \cite{c-g09,CGO}, it seems that the model equation \eqref{eq:AD0} is less stable than the full Muskat problem.

Once the local solution for analytic data is known, we turn our attention to the global solution for initial data satisfying certain size restrictions in critical spaces. Then, our well-posedness result for Wiener class initial data reads as follows
\begin{theorem}\label{thm:GWP_A1}
Let $f_0\in A^1$ be the initial data for \eqref{eq:AD0}. Assume that
\begin{equation*}
\norm{f_0}_{\dot{A}^1}< 1/2, 
\end{equation*}
then the Cauchy problem \eqref{eq:AD0} is globally well-posed and admits a unique solution 
$$ 
f \in L^\infty\pare{\bR_+; A^{1}\pare{\bT} }\cap \mathcal{M}\pare{\bR_+; A^{2}\pare{\bT} },
$$ 
verifying
$$
\norm{f(t)}_{A^1}\leqslant \norm{f_0}_{A^1},
$$
and
$$
\norm{f(t)}_{A^0}\leqslant \norm{f_0}_{A^0}e^{-\left(1-2\norm{f_0}_{A^1}\right)t}.
$$
\end{theorem}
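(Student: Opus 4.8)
The plan is to close everything from a single differential inequality for $\norm{f}_{\dot A^1}$ coming from a sharp Fourier bilinear estimate, and then to propagate it globally and extract the decay. Since \eqref{eq:AD0} conserves the mean we assume $f_0$ (hence $f$) mean-free, so $A^s=\dot A^s$, and write the equation as $\partial_t f+\Lambda f=N(f)$ with $N(f):=\Lambda(f\Lambda f)+\partial_x(f\partial_x f)$. On the Fourier side,
\[
\widehat{N(f)}(n)=\sum_{k}\bigl(|n||k|-nk\bigr)\hat f(n-k)\hat f(k),
\]
and the symbol $|n||k|-nk$ equals $0$ when $\sgn n=\sgn k$ and $2|n||k|$ when $\sgn n=-\sgn k$; on this latter (``resonant'') set one has the identity $|n-k|=|n|+|k|$, in particular $|n|\le|n-k|$. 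This is the quantitative form of the gain of derivatives of Lemma~\ref{lemma1} and is the only mechanism used.

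\textbf{A priori estimate and global propagation.} From $\ddt|\hat f(n)|\le-|n||\hat f(n)|+|\widehat{N(f)}(n)|$, summing against $|n|$ and using $|n|^2\le|n-k|^2$ on the resonant set followed by the (termwise positive) convolution sum,
\[
\ddt\norm{f}_{\dot A^1}\le-\norm{f}_{\dot A^2}+2\norm{f}_{\dot A^1}\norm{f}_{\dot A^2}=-\bigl(1-2\norm{f}_{\dot A^1}\bigr)\norm{f}_{\dot A^2}.
\]
Under $\norm{f_0}_{\dot A^1}<1/2$ a continuity/bootstrap argument keeps the right-hand side $\le0$ for all times, so $\norm{f(t)}_{\dot A^1}$ is nonincreasing and $\le\norm{f_0}_{\dot A^1}$; integrating in time, $(1-2\norm{f_0}_{\dot A^1})\int_0^\infty\norm{f(s)}_{\dot A^2}\,\dt\le\norm{f_0}_{\dot A^1}$, i.e. $f\in\cM(\bR_+;A^2)$.

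\textbf{Construction and uniqueness.} I would obtain the solution by a Galerkin truncation $f^N$: the resonant structure, hence the inequality above, survives the truncation, so each $f^N$ is global with uniform bounds $\norm{f^N(t)}_{\dot A^1}\le\norm{f_0}_{\dot A^1}$ and $f^N\in\cM(\bR_+;A^2)$. Since $\norm{N(f^N)}_{A^0}\lesssim\norm{f^N}_{\dot A^1}^2$ one bounds $\partial_t f^N$ in a negative-index space; as $A^1\hookrightarrow\hookrightarrow A^s$ for $s<1$, Aubin--Lions (or Arzel\`a--Ascoli plus interpolation) gives strong convergence in $C_{\loc}(\bR_+;A^s)$, enough to pass to the limit in every term, and weak-$\ast$ lower semicontinuity gives $\norm{f(t)}_{A^1}\le\norm{f_0}_{A^1}$. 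For uniqueness, the difference $g=f_1-f_2$ solves $\partial_t g+\Lambda g=B(f_1+f_2,g)$, $B$ being the polarization of $N$; the symmetrized symbol $\tfrac12\bigl[|n|(|k|+|n-k|)-n^2\bigr]$ is supported where $\sgn k\ne\sgn(n-k)$, on which $|n|\le\max(|k|,|n-k|)$, yielding the sharp bound $\norm{B(h,g)}_{A^0}\le\norm{h}_{\dot A^1}\norm{g}_{\dot A^1}$ (constant $1$ --- essential here). Thus $\ddt\norm{g}_{A^0}\le-\bigl(1-\norm{f_1}_{\dot A^1}-\norm{f_2}_{\dot A^1}\bigr)\norm{g}_{\dot A^1}\le-\bigl(1-2\norm{f_0}_{\dot A^1}\bigr)\norm{g}_{A^0}\le0$, so $g\equiv0$.

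\textbf{Decay, and the main obstacle.} Repeating the energy computation for $\norm{f}_{A^0}=\sum_n|\hat f(n)|$ with $|n|\le|n-k|$ on the resonant set gives $\ddt\norm{f}_{A^0}\le-\norm{f}_{\dot A^1}+2\norm{f}_{\dot A^1}^2$; since $|n|\ge1$ on the mean-free spectrum one has $\norm{f}_{\dot A^1}\ge\norm{f}_{A^0}$, and with $1-2\norm{f}_{\dot A^1}\ge1-2\norm{f_0}_{\dot A^1}>0$ this closes to $\ddt\norm{f}_{A^0}\le-\bigl(1-2\norm{f_0}_{\dot A^1}\bigr)\norm{f}_{A^0}$, and Gr\"onwall gives the stated exponential decay. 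All the content sits in the resonance identity $|n-k|=|n|+|k|$ and the factor $2$; I expect the main technical obstacle to be the limiting argument in the approximation scheme --- the solution being only $L^\infty$, not continuous, with values in $A^1$ --- together with obtaining uniqueness at the sharp threshold, which is precisely what forces the careful tracking of the bilinear constants.
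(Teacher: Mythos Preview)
Your argument is correct and follows essentially the same route as the paper. Both proofs hinge on the same Fourier-side cancellation: writing $\widehat{N(f)}(n)=\sum_k(|n||k|-nk)\hat f(n-k)\hat f(k)$, observing that the symbol is supported where $\sgn n=-\sgn k$ (equivalently, in the paper's indexing, where $0<|k|<|m|$), and exploiting $|n|\le|n-k|$ on that set to obtain $\|N(f)\|_{A^1}\le 2\|f\|_{A^1}\|f\|_{A^2}$, hence the differential inequality $\ddt\|f\|_{A^1}+(1-2\|f\|_{A^1})\|f\|_{A^2}\le 0$. The decay and uniqueness arguments are likewise identical in substance; your treatment of uniqueness is in fact slightly more explicit than the paper's, which simply states the $A^0$ inequality for the difference without displaying the symmetrized symbol. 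The only difference worth noting is in the construction step: the paper truncates only the initial data and invokes the local analytic existence (its Theorem~\ref{thm:analytic}) to produce smooth approximants on which the a priori estimate can be run, whereas you propose a Galerkin projection of the equation itself. Both are standard and both work here, since the bilinear cancellation is preserved under Fourier truncation.
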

We observe that this is a global existence and decay for initial data in a critical space.


The next result we prove in the present manuscript is a global existence result for initial data in $ H^{\frac{3}{2}} \cap H^{\frac{3}{2}+\varepsilon}, \ \varepsilon >0 $, we suppose the initial data to be small in $ H^{\frac{3}{2}}  $ and of arbitrary size in $H^{\frac{3}{2}+\varepsilon} $.
Very recently D. Cordoba  and O. Lazar proved in \cite{CL} that it possible to construct global solutions for the 2D (full) Muskat problem when the initial data is small in the space $ H^{3/2}$ and also belongs to $H^{5/2} $. 

The following theorem establishes a similar result for the evolution equation \eqref{eq:AD0};

\begin{theorem}\label{thm:GWP_H3/2}
Let us suppose $ f_0 \in H^{\frac{3}{2}} \cap H^{\frac{3}{2}+\varepsilon}, \ \varepsilon>0 $
There exists a constant $ C>0 $ such that, if $ f_0 $ is such that
\begin{equation*}
\norm{f_0}_{H^{\frac{3}{2}}} <
\min \set{1 \  , \frac{\varepsilon^2}{C \ \pare{\norm{f_0}_{H^{\frac{3}{2}+\varepsilon}} \log \norm{f_0}_{H^{\frac{3}{2}+\varepsilon}} +1}^2}  } , 
\end{equation*} 
the Cauchy problem \eqref{eq:AD0} is globally well-posed and admits a unique solution $ f $ such that for any $ T>0 $
\begin{equation*}
f\in C^{0, \frac{1}{2} -\vartheta} \pare{\left[0, T\right) ; H^{1+\vartheta\pare{\varepsilon+1}}} \cap L^\infty\pare{\left[0, T\right); H^{\frac{3}{2}+\varepsilon}} \cap L^2\pare{\left[0, T\right); H^{2 +\varepsilon}}, \hspace{1cm} \frac{1}{2\pare{\varepsilon+1}} \leqslant \vartheta <\frac{1}{2} . 
\end{equation*}
\end{theorem}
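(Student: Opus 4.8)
\emph{Proof strategy.} The first move is to exploit the commutator structure of \eqref{eq:AD0}. By Lemma~\ref{lemma1}, when one expands $\Lambda(f\Lambda f)+\partial_x(f\partial_x f)$ the two apparently second–order terms $f\,\Lambda^2 f$ and $f\,\partial_x^2 f$ cancel (since $\Lambda^2=-\partial_x^2$), and the equation takes the genuinely first–order form
\begin{equation*}
\partial_t f+\Lambda f=\comm{\Lambda}{f}\Lambda f+(\partial_x f)^2 .
\end{equation*}
On the left–hand side $\Lambda$ acts as a half–Laplacian, so an $\dot H^s$ energy estimate gains half a derivative; this is precisely what yields the space $L^2\pare{[0,T);H^{2+\varepsilon}}$ in the statement. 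The solution itself is built by a standard regularisation of the nonlinearity (Friedrichs mollification or Galerkin truncation), uniform a priori bounds and a compactness argument; all the difficulty is in the a priori bounds, which I would organise in two layers: a \emph{global} bound for the norm $\norm{f}_{\dot H^{3/2}}$ (critical for \eqref{eq:scaling}), and a propagation, with a logarithmic loss, of the higher norm $\norm{f}_{\dot H^{3/2+\varepsilon}}$.

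For the first layer, test the equation with $\Lambda^3 f$:
\begin{equation*}
\tfrac12\,\ddt\norm{f}_{\dot H^{3/2}}^2+\norm{f}_{\dot H^{2}}^2=\int_{\bT}\Lambda^{3/2}\!\pare{\comm{\Lambda}{f}\Lambda f+(\partial_x f)^2}\,\Lambda^{3/2}f\,\dx .
\end{equation*}
Using the Kato--Ponce / Kenig--Ponce--Vega commutator and product estimates on $\bT$, the right–hand side is bounded by $C\,\norm{f}_{\dot H^{3/2}}\norm{f}_{\dot H^{2}}^2$, which is sharp for the scaling. Hence, if $\norm{f_0}_{H^{3/2}}<\tfrac12$ --- guaranteed by the hypothesis --- a continuity/bootstrap argument shows that $t\mapsto\norm{f(t)}_{\dot H^{3/2}}$ is non–increasing, stays $<\tfrac12$, and in addition
\begin{equation*}
\int_0^\infty\norm{f(s)}_{\dot H^{2}}^2\,\dt\;\leqslant\;\norm{f_0}_{\dot H^{3/2}}^2 .
\end{equation*}

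For the second layer, test with $\Lambda^{3+2\varepsilon}f$; this produces the identity above with $3/2$ replaced by $3/2+\varepsilon$. The crucial point is that one must \emph{not} repeat the critical estimate verbatim, since that would put the large quantity $\norm{f}_{\dot H^{3/2+\varepsilon}}$ in front of the dissipation and nothing would close globally. Instead, after paralinearising the nonlinearity one isolates the unique non–absorbable contribution --- morally $\int(\partial_x f)\,(\Lambda^{3/2+\varepsilon}f)\times(\text{factors of order}\leqslant 3/2+\varepsilon)$ --- and bounds it by a constant times $\norm{\partial_x f}_{L^\infty}\norm{f}_{\dot H^{2+\varepsilon}}^2$, the remaining terms being of strictly lower order and absorbed by the dissipation with the help of the integrability of $\norm{f}_{\dot H^{2}}^2$ from the first layer and Young's inequality. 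One then controls $\norm{\partial_x f}_{L^\infty}$ by a Brezis--Gallouet--type logarithmic interpolation between the \emph{globally small} norm $\norm{f}_{\dot H^{3/2}}$ --- recall that $\partial_x f\in\dot H^{1/2}$ only just fails to embed into $L^\infty$, which is the source of the logarithm --- and the higher norm $\norm{f}_{\dot H^{3/2+\varepsilon}}$, the relevant embedding constant blowing up like a negative power of $\varepsilon$ as in \eqref{eq:Sobolev_emb_with_constant}. Feeding this into the $\dot H^{3/2+\varepsilon}$ balance and running a bootstrap based on the smallness hypothesis keeps the effective coefficient in front of the dissipation below $1$ for all times; this simultaneously forces $\norm{f}_{\dot H^{3/2+\varepsilon}}$ to remain bounded (in fact non–increasing) and yields $\int_0^\infty\norm{f(s)}_{\dot H^{2+\varepsilon}}^2\,\dt<\infty$. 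Carefully accounting for how $\varepsilon$, the logarithm and $\norm{f_0}_{H^{3/2+\varepsilon}}$ propagate through the bootstrap reproduces precisely the stated threshold, in the spirit of \cite{CL}. Finally, from $f\in L^\infty\pare{[0,T);H^{3/2+\varepsilon}}\cap L^2\pare{[0,T);H^{2+\varepsilon}}$ the equation gives $\partial_t f\in L^2\pare{[0,T);H^{-\sigma}}$ for a suitable $\sigma\geqslant0$, and interpolating between these two bounds produces the Hölder–in–time regularity $C^{0,\frac12-\vartheta}\pare{[0,T);H^{1+\vartheta(\varepsilon+1)}}$ for $\vartheta\in[\tfrac1{2(\varepsilon+1)},\tfrac12)$; an Aubin--Lions compactness argument promotes the approximate solutions to a genuine solution, and uniqueness follows from a stability estimate for the difference of two solutions in a low norm (e.g. $L^2$ or $\dot H^{1/2}$), closed by Gronwall using the regularity already obtained. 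The main obstacle is the second layer: extracting from the $\dot H^{3/2+\varepsilon}$ energy balance an estimate whose dissipation is multiplied only by the \emph{small} norm $\norm{f}_{\dot H^{3/2}}$ times at worst a logarithm of the large norm, and keeping all constants --- especially their dependence on $\varepsilon$ through \eqref{eq:Sobolev_emb_with_constant} --- explicit enough to recover the sharp smallness condition.
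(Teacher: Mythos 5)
Your overall architecture (two tiers of energy estimates, a logarithmic interpolation between the small critical norm and the large supercritical norm, a time-derivative bound plus interpolation for the H\"older-in-time regularity, and compactness) matches the paper's, and your rewriting of the nonlinearity as $\comm{\Lambda}{f}\Lambda f+(\partial_x f)^2$ correctly captures the cancellation that the paper encodes in the trilinear form $\cN$ and the bound \eqref{lem:commutation}. However, there is a genuine gap in your first layer. You claim that Kato--Ponce/KPV estimates give
\begin{equation*}
\tfrac12\,\ddt\norm{f}_{\dot H^{3/2}}^2+\norm{f}_{\dot H^{2}}^2\leqslant C\,\norm{f}_{\dot H^{3/2}}\norm{f}_{\dot H^{2}}^2,
\end{equation*}
so that the critical estimate closes by itself. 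This is precisely the step that fails: those commutator and product estimates require control of $\norm{\partial_x f}_{L^\infty}$, and in one dimension $\partial_x f\in \dot H^{1/2}$ just misses $L^\infty$ --- the same endpoint failure you correctly invoke in your second layer. The paper's actual $H^{3/2}$ balance, \eqref{eq:H3/2_est_final}, is \emph{not} self-contained: after optimizing the $L^p$ losses (taking $p=1/\varepsilon$, with a $\sqrt{p}$ growth of the embedding constant) it retains the term $\tfrac{1}{\sqrt{\varepsilon}}\norm{f}_{H^{3/2}}\norm{\Lambda^{1/2}f}_{H^{3/2}}\norm{\Lambda^{1/2}f}_{H^{3/2+\varepsilon}}$, so the $H^{3/2}$ and $H^{3/2+\varepsilon}$ estimates must be summed and closed \emph{simultaneously}. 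This coupling is exactly what forces the smallness threshold in the statement to depend on $\norm{f_0}_{H^{3/2+\varepsilon}}$ and on $\varepsilon$; if your Layer 1 were correct as written, the theorem would hold with a threshold independent of the higher norm, contradicting the form of the statement itself.

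Two further remarks. First, your Layer 2 mechanism is essentially the paper's: the bound by $\tfrac1\delta\norm{f}_{H^{3/2+\delta}}$ followed by the interpolation $\norm{f}_{H^{3/2+\delta}}\leqslant\norm{f}_{H^{3/2}}^{1-\delta/\varepsilon}\norm{f}_{H^{3/2+\varepsilon}}^{\delta/\varepsilon}$ and the optimization $\delta=\varepsilon\log_\beta 2$ is a Brezis--Gallouet argument in disguise, and it is what produces the $\log\norm{f_0}_{H^{3/2+\varepsilon}}$ in the threshold --- you identified this correctly. Second, the paper does not run your bootstrap verbatim but instead proves the sharp trilinear cancellation $\av{\cN(g,h,h)}\leqslant C\norm{g}_{H^{3/2}}\norm{h}_{H^{1/2}}^2$ by observing that the Fourier symbol $(1+\sgn m\,\sgn k)$ kills all interactions except $\av{k}\leqslant\av{n}$; this is then injected into the Chemin--Lerner/Bony decomposition so that the most singular paraproduct terms $T^A_{1,q}-T^B_{1,q}=\cN(S_{q-1}f,\tqS f,\tqS f)$ are handled by the cancellation, while only the milder commutator and remainder terms need the $\varepsilon$ of extra room. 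To repair your proof you would need to either carry out this decomposition or otherwise justify a critical estimate that avoids $\norm{\partial_x f}_{L^\infty}$ on the most singular terms, and then accept that the two energy levels close only jointly.
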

In other words, the initial data must be small in $H^{3/2}$ even for large $\epsilon$. Furthermore, if the initial data is large in $H^{3/2+\varepsilon}$, then the size condition for the initial data in $H^{3/2}$ is even more restrictive.

The methodology used in order to prove Theorem \ref{thm:GWP_H3/2} differs completely from the one used in \cite{CL}, since \eqref{eq:AD0} presents a nonlinearity of polynomial type we will be able to use tools characteristic of the paradifferentail calculus, we refer the reader to \cite[Chapter 2]{BCD}; we will hence decompose the nonlinearity in an infinite sum of elementary packets on which we will be able to highlight some nontrivial regularizing properties of the equation \eqref{eq:AD0}.

\subsection{Results for the gravity-capillary driven case \eqref{eq:AD}}


We are able to extend Theorem \ref{thm:GWP_A1} to the case with surface tension:
\begin{theorem}\label{thm:GWP_A1surf}
Let $\nu>0$ be a fixed parameter and $f_0\in A^1$ be the initial data for \eqref{eq:AD}. Assume that
\begin{equation*}
\norm{f_0}_{\dot{A}^1}< 1/2, 
\end{equation*}
then the Cauchy problem \eqref{eq:AD} is globally well-posed and admits a unique solution 
$$ 
f \in L^\infty\pare{\bR_+; A^{1}\pare{\bT} }\cap \mathcal{M}\pare{\bR_+; A^{4}\pare{\bT} },
$$ 
verifying
$$
\norm{f(t)}_{A^1}\leqslant \norm{f_0}_{A^1},
$$
and
$$
\norm{f(t)}_{A^0}\leqslant \norm{f_0}_{A^0}e^{-\left(1-2\norm{f_0}_{A^1}\right)(1+\nu)t}.
$$
\end{theorem}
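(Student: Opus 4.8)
The plan is to transport the equation to the Fourier side and exploit the cancellation hidden in the commutator, exactly as one does for Theorem~\ref{thm:GWP_A1}, the only genuinely new bookkeeping being that of the capillary terms. Write $\hat f_n(t)$ for the Fourier coefficients of $f(t)$; since the mean is conserved we take it to vanish, so $\hat f_0\equiv0$. Expanding \eqref{eq:AD} as in \eqref{eq:AD_nu>0}, the equation reads
\begin{equation*}
\ddt \hat f_n = -\pare{\nu|n|^3+|n|}\hat f_n + \widehat{N(f)}_n ,\qquad N(f)=\Lambda\pare{f\,\Lambda f}+\pax\pare{f\,\pax f}+\nu\bigl(\Lambda\pare{f\,\Lambda^3 f}-\pax\pare{f\,\pax^3 f}\bigr).
\end{equation*}
Computing each quadratic term on the Fourier side and using $k=\sgn(k)\,|k|$ and $k^3=\sgn(k)\,|k|^3$, both commutators collapse to
\begin{equation*}
\widehat{N(f)}_n = 2\sum_{\substack{k\in\bZ\\ \sgn(k)=-\sgn(n)}}|n|\,|k|\,\pare{1+\nu k^2}\,\hat f_{n-k}\,\hat f_k ,
\end{equation*}
that is, only opposite-sign frequency interactions survive; this is exactly the quasilinear reduction recorded in Lemma~\ref{lemma1}. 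The crucial consequence is that in every surviving term $|n-k|=|n|+|k|$, hence $|n|\le|n-k|$ and $|k|\le|n-k|$, and this single inequality powers all the estimates below.

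\emph{The $\dot A^1$ estimate.} Multiplying the $n$-th equation by $|n|$, passing to absolute values (the non-differentiability of $|\hat f_n|$ at its zeros is handled in the usual way) and summing over $n$, the linear part yields $-\norm{f}_{\dot A^2}-\nu\norm{f}_{\dot A^4}$, while in the nonlinear sum one uses $|n|^2|k|\le|n-k|^2|k|$ and $\nu|n|^2|k|^3\le\nu|n-k|^4|k|$ and relabels $j=n-k$ to factorise it; this gives
\begin{equation*}
\ddt\norm{f}_{\dot A^1}\le-\norm{f}_{\dot A^2}-\nu\norm{f}_{\dot A^4}+2\norm{f}_{\dot A^2}\norm{f}_{\dot A^1}+2\nu\norm{f}_{\dot A^4}\norm{f}_{\dot A^1}=-\pare{1-2\norm{f}_{\dot A^1}}\pare{\norm{f}_{\dot A^2}+\nu\norm{f}_{\dot A^4}}.
\end{equation*}
A continuity argument then shows that $\norm{f_0}_{\dot A^1}<1/2$ is propagated for all times, that $t\mapsto\norm{f(t)}_{\dot A^1}$ is non-increasing, and, after integrating, that $(1-2\norm{f_0}_{\dot A^1})\int_0^\infty\nu\norm{f(t)}_{\dot A^4}\,\d t\le\norm{f_0}_{\dot A^1}$. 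This delivers $f\in L^\infty(\bR_+;A^1)\cap\cM(\bR_+;A^4)$ together with $\norm{f(t)}_{A^1}\le\norm{f_0}_{A^1}$, the gain of three derivatives in the integrated bound being the $\nu\Lambda^3$ dissipation.

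\emph{The $A^0$ decay.} Running the same computation without the weight $|n|$, the dissipative part contributes $-\norm{f}_{\dot A^1}-\nu\norm{f}_{\dot A^3}$, while the nonlinear sum, using only $|n|\le|n-k|$, is bounded by $2\norm{f}_{\dot A^1}^2+2\nu\norm{f}_{\dot A^1}\norm{f}_{\dot A^3}$, so that
\begin{equation*}
\ddt\norm{f}_{A^0}\le-\pare{1-2\norm{f}_{\dot A^1}}\pare{\norm{f}_{\dot A^1}+\nu\norm{f}_{\dot A^3}}.
\end{equation*}
Since a mean-free function on $\bT$ is supported in frequencies $|n|\ge1$, one has $\norm{f}_{\dot A^s}\le\norm{f}_{\dot A^{s'}}$ whenever $s\le s'$; in particular $\norm{f}_{\dot A^1}+\nu\norm{f}_{\dot A^3}\ge(1+\nu)\norm{f}_{A^0}$, while the monotonicity just obtained gives $1-2\norm{f(t)}_{\dot A^1}\ge1-2\norm{f_0}_{\dot A^1}$. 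Hence $\ddt\norm{f}_{A^0}\le-(1-2\norm{f_0}_{\dot A^1})(1+\nu)\norm{f}_{A^0}$, and Gr\"onwall's inequality produces the claimed exponential decay.

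\emph{Existence, uniqueness and the main obstacle.} Existence I would obtain by Galerkin truncation to frequencies $|n|\le N$: the truncated system is a finite-dimensional ODE, and the two a priori estimates above survive the truncation verbatim (projecting neither changes the sign of the dissipative contribution nor spoils the opposite-sign structure of the nonlinearity), so the approximate solutions are global and uniformly bounded in $L^\infty(\bR_+;A^1)\cap\cM(\bR_+;A^4)$; the equation then controls $\pat f^N$ in a weaker norm, which is enough to pass to the limit and recover $f\in C(\bR_+;A^1)$. The main obstacle is uniqueness at this critical regularity: for two solutions $f_1,f_2$ with the same datum the difference $g=f_1-f_2$ solves the same linear problem with right-hand side bilinear in $(g,f_1)$ and $(g,f_2)$, and the opposite-sign structure again forces $|n|\le|n-k|$, so that those contributions carrying the highest norm of $g$ can be absorbed into the dissipation using $\norm{f_i}_{\dot A^1}<1/2$, while the remaining ones---carrying $\norm{f_i}_{\dot A^2}$ or $\norm{f_i}_{\dot A^3}$---enter a Gr\"onwall inequality as integrable-in-time coefficients after an elementary interpolation; here one uses $\int_0^\infty\norm{f_i}_{\dot A^2}^2\,\d t\le\norm{f_0}_{\dot A^1}\int_0^\infty\norm{f_i}_{\dot A^4}\,\d t<\infty$ and the instantaneous smoothing of the $e^{-t\nu\Lambda^3}$ semigroup (which makes $f_i$ smooth for $t>0$, with $\norm{f_i(t)}_{\dot A^s}\lesssim_{\nu,s}t^{-(s-1)/3}\norm{f_0}_{\dot A^1}$). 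Finally the bound $\norm{f(t)}_{A^1}\le\norm{f_0}_{A^1}$ and the exponential decay pass to the limit solution by lower semicontinuity of the norms.
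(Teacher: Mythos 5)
Your proposal is correct and follows essentially the same route as the paper: Fourier-side $\dot A^1$ and $A^0$ energy estimates exploiting the opposite-sign cancellation $|k||n|-kn$ (resp.\ $|k||n|^3-kn^3$) in the commutator symbol, absorption of the nonlinearity into the dissipation under $\norm{f_0}_{\dot A^1}<1/2$, a Galerkin/truncation scheme for existence, and Poincar\'e plus Gr\"onwall for the $(1+\nu)$ decay rate. The only cosmetic difference is that you bound $|n|^2|k|^3\leqslant|n-k|^4|k|$ directly from $|n-k|=|n|+|k|$, whereas the paper first obtains $\norm{f}_{A^2}\norm{f}_{A^3}$ and then interpolates to $\norm{f}_{A^1}\norm{f}_{A^4}$; you also supply more detail on uniqueness than the paper does.
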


\begin{rem} We observe that the size condition is $\nu-$independent.
\end{rem}

The following result is a result analogous of the one stated in Theorem \ref{thm:GWP_H3/2} for the system \eqref{eq:AD} when $ \nu > 0 $: 

\begin{theorem}\label{thm:GWP_H2}
Let us suppose that $ f_0 $ is zero mean function on $ \bT $, $ f_0\in H^2\pare{\bT} $ such that 
\begin{equation}\label{eq:smallness_hyp_H2}
\norm{f_0}_{H^2\pare{\bT}} \leqslant \frac{1}{C} \ \min \set{1, \ \nu^{-\frac{1}{4}}}. 
\end{equation}
Then there exists a unique solution $ f \in \cC\pare{\bR_+; H^2}\cap L^2\pare{\bR_+; H^{\frac{7}{2}}} $ of the Cauchy problem \eqref{eq:AD} such that for any $ t> 0 $
\begin{equation*}
\norm{f\pare{t}}_{H^2\pare{\bT}}^2 + \int_0^t \bra{ \nu \norm{ \Lambda^{3/2} f\pare{s}}_{H^2\pare{\bT}}^2 + \norm{ \Lambda^{1/2} f\pare{s}}_{H^2\pare{\bT}}^2} \dt \leqslant \norm{f_0}_{H^2\pare{\bT}}^2.
\end{equation*}
\end{theorem}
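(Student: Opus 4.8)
The plan is to run a standard energy estimate at the $H^2$ level, exploiting the commutator structure of the nonlinearity to upgrade the apparent loss of derivatives into a genuinely parabolic-type inequality. First I would recall from Lemma \ref{lemma1} that the commutator $\comm{\mathcal{H}}{f}$ applied to a function of the form $\nu\Lambda^3 g + \Lambda g$ actually gains regularity relative to the naive count, so the right-hand side of \eqref{eq:AD}, when paired against $\Lambda^4 f$ in $L^2$, produces terms that can be absorbed by the two good dissipative contributions $\nu\|\Lambda^{3/2}f\|_{H^2}^2$ and $\|\Lambda^{1/2}f\|_{H^2}^2$. Concretely, I would multiply \eqref{eq:AD} by $\Lambda^4 f$, integrate over $\bT$, and obtain
\begin{equation*}
\frac12 \ddt \norm{f}_{H^2}^2 + \nu \norm{\Lambda^{3/2} f}_{H^2}^2 + \norm{\Lambda^{1/2} f}_{H^2}^2 = \int_\bT \Lambda^4 f \ \partial_x\pare{\comm{\mathcal{H}}{f}\pare{\nu\Lambda^3 f + \Lambda f}}\dx .
\end{equation*}
The whole game is then to bound the right-hand side by $C\pare{\nu^{1/2}\norm{f}_{H^2}\nu^{1/2}\norm{\Lambda^{3/2}f}_{H^2}^2 + \norm{f}_{H^2}\norm{\Lambda^{1/2}f}_{H^2}^2}$ plus similar mixed terms, so that under the smallness hypothesis \eqref{eq:smallness_hyp_H2} — which is precisely calibrated to make $C\norm{f_0}_{H^2}\max\{1,\nu^{1/2}\} \le \tfrac12$ — the nonlinear contribution is swallowed by the dissipation.

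The key technical step is the commutator estimate. I would use Littlewood–Paley decomposition (Appendix \ref{elements LP}) and a Bony paraproduct decomposition of $\comm{\mathcal{H}}{f}g$ into paraproduct and remainder pieces; the crucial point, as in the treatment of the full Muskat problem, is that $\comm{\mathcal{H}}{f}$ behaves like an operator of order $-1$ when $f$ is smooth, so $\partial_x\comm{\mathcal{H}}{f}\pare{\Lambda g}$ effectively costs only as many derivatives as $\Lambda g$ itself rather than $\Lambda g$ plus one. Tracking the highest-order frequency — whether it lands on $f$ or on the inner $\Lambda^3 f$ / $\Lambda f$ — one splits into the case where $f$ is at low frequency (then $\comm{\mathcal{H}}{f}$ is essentially a bounded multiplication and one integrates by parts, distributing half of the $\Lambda^{3/2}$ or $\Lambda^{1/2}$ to each factor, paying a factor $\norm{f}_{L^\infty} \lesssim \norm{f}_{H^2}$ via \eqref{eq:Sobolev_emb_with_constant}) and the case where $f$ is at high frequency (then $\nu\Lambda^3 f$ or $\Lambda f$ is at low frequency and the commutator gain makes the term subcritical). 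The $\nu$-dependence of the smallness threshold — the $\nu^{-1/4}$ — should emerge from balancing the $\nu\Lambda\pare{f\Lambda^3 f} - \nu\partial_x\pare{f\partial_x^3 f}$ term against $\nu\norm{\Lambda^{3/2}f}_{H^2}^2$: a Gagliardo–Nirenberg interpolation there leaves behind a power of $\nu$ that forces the quartic/quadratic comparison to require $\norm{f}_{H^2} \lesssim \nu^{-1/4}$.

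Once the differential inequality $\ddt\norm{f}_{H^2}^2 + \nu\norm{\Lambda^{3/2}f}_{H^2}^2 + \norm{\Lambda^{1/2}f}_{H^2}^2 \le 0$ is established on the a priori level, the conclusion follows by a routine continuation/bootstrap argument: local existence in $H^2$ (which I would obtain by a standard mollification/Galerkin scheme, since for fixed smooth data the equation is, by Lemma \ref{lemma1}, a quasilinear third-order parabolic problem with a good sign on the principal symbol $-\nu\Lambda^3 - \Lambda$), then the a priori bound shows $\norm{f(t)}_{H^2} \le \norm{f_0}_{H^2}$ persists, preventing blow-up and giving global existence; integrating the differential inequality in time yields exactly the stated energy inequality with the $L^2_t H^{7/2}$ (i.e. $L^2_t$ of $\nu\|\Lambda^{3/2}f\|_{H^2}^2$) control, and continuity in time in $H^2$ follows from the $L^2_t H^{7/2}$ regularity plus the equation. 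Uniqueness is a similar energy estimate for the difference of two solutions at a lower regularity level, say $L^2$ or $H^{1/2}$, again using the commutator gain to close. I expect the main obstacle to be the bookkeeping in the commutator estimate: making sure that in every frequency regime one can distribute the available half-derivatives of dissipation so as to produce a clean product of the dissipative norms times $\norm{f}_{H^2}$, and in particular pinning down the sharp power of $\nu$ that dictates the $\nu^{-1/4}$ in \eqref{eq:smallness_hyp_H2}.
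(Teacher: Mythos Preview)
Your proposal is essentially the same strategy as the paper's proof: an $H^2$ energy estimate exploiting the commutator gain via a Bony/paraproduct decomposition, closed under the smallness hypothesis and then made rigorous by a Galerkin approximation with Aubin--Lions compactness. The organizational difference is that the paper does not multiply by $\Lambda^4 f$ directly; instead it applies $\tqS$ to the equation, multiplies by $\tqS f$, and uses the Chemin--Lerner decomposition \eqref{bony decomposition asymmetric} so that the most singular piece becomes exactly the trilinear form $\cM(S_{q-1}f,\tqS f,\tqS f)$ of \eqref{eq:def_cM}, whose Fourier-side sign cancellation (the factor $\sgn(n-k)\sgn(k)+1$ forcing $0<k<n$) yields $|\cM(g,h,h)|\lesssim \nu\|g\|_{H^2}\|h\|_{H^{1/2}}^{1/2}\|h\|_{H^{3/2}}^{3/2}$; Young's inequality on this mixed term is what produces the $\nu^{4/3}\|f\|_{H^2}^{4/3}$ coefficient and hence the $\nu^{-1/4}$ threshold. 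Your calibration $C\|f_0\|_{H^2}\max\{1,\nu^{1/2}\}\le\tfrac12$ would instead give a $\nu^{-1/2}$ threshold, which is more restrictive than \eqref{eq:smallness_hyp_H2}; as you correctly anticipate, the sharp power comes from the interpolation step, and getting it right requires isolating the $\|h\|_{H^{1/2}}^{1/2}\|h\|_{H^{3/2}}^{3/2}$ structure rather than the cruder $\|h\|_{H^{3/2}}^2$.
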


\subsection{Discussion}
In this paper we prove several well-posedness results for an asymptotic model of free boundary Darcy flow. Most of them are global existence results in scale invariant spaces. In that regards, our results should be understood as nonlinear stability results rather than linear stability (even if some size conditions are imposed on the initial data). These results excludes the existence of turning waves singularities but leave open the door to cusp singularities. The occurrence of such behavior will be the object of future research.

\section{Gravity driven system \eqref{eq:AD0}}\label{sec:gravity}

\subsection{Proof of Theorem \ref{thm:analytic}}
We look for a solution of the form
$$
f(x,t)=\lambda \sum_{\ell=0}^\infty \lambda^\ell f^{(\ell)}(x,t),
$$
where $\lambda=\lambda (f_0)$ will be chosen below. Then, the existence of solution is reduced to the summability of a series where each term satisfy a linear problem. A similar idea can be tracked back to the works of Oseen \cite{Os1912} (see also \cite{CGSW}). Indeed, matching the appropriate terms, we find that $f^{(\ell)}$ satisfies
$$
\partial_tf^{(\ell)} +\Lambda f^{(\ell)}=\sum_{j=0}^{\ell-1} \Lambda\left(f^{(j)}\Lambda f^{(\ell-j-1)}\right)+\partial_x \left(f^{(j)}\partial_x f^{(\ell-j-1)}\right),
$$
with initial data
$$
f^{(\ell)}(x,0)=0 \text{ if }\ell\neq0
$$
and
$$
f^{(0)}(x,0)=\frac{f_0}{\lambda} \text{ otherwise.}
$$
Then
$$
f^{(0)}= e^{-t\Lambda}\frac{f_0}{\lambda},
$$
and we can solve recursively for the other $f^{(\ell)}$. In particular, using that the solution of
$$
\partial_t u(x,t)+\Lambda u(x,t)=F(x,t),\quad u(x,t)=g(x)
$$
is given by
$$
\hat{u}(k,t)=e^{-t|k|}\hat{g}(k)+\int_{0}^t e^{-(t-s)k}\hat{F}(k,s)ds,
$$
we find that
$$
\widehat{f^{(\ell)}}(k,t)=\sum_{j=0}^{\ell-1}\sum_{n=-\infty}^\infty\int_0^te^{-(t-s)|k|}\left[|k|\left( \widehat{f^{(j)}}(n,s)|k-n|\widehat{f^{(\ell-j-1)}}(k-n,s)\right)+ik\left( \widehat{f^{(j)}}(n,s)i(k-n)\widehat{f^{(\ell-j-1)}}(k-n,s)\right)\right].
$$
Thus, using
$$
|k||k-n|-k(k-n)\leq |k-n|^2+|n||k-n|-(k-n)^2-n(k-n),
$$
we have that
\begin{align}
\widehat{\partial_{x} \left(\comm{\mathcal{H}}{a}\Lambda b\right)}&=\hat{a}(k-n)\hat{b}(n)\left(|k||k-n|-k(k-n)\right)\nonumber\\
&\leqslant 2\left|\hat{a}(k-n)\hat{b}(n)|n||k-n|\right|.\label{commFourier}
\end{align}
Thus, we obtain that
$$
|\widehat{f^{(\ell)}}(k,t)|\leq 2\sum_{j=0}^{\ell-1}\sum_{n=-\infty}^\infty\int_0^te^{-(t-s)|k|}| \widehat{f^{(j)}}(n,s)||n||k-n||\widehat{f^{(\ell-j-1)}}(k-n,s)|ds.
$$
We now fix $1\ll R$ and consider the partial sum
$$
f_R(x,t)=\lambda \sum_{\ell=0}^R \lambda^\ell f^{(\ell)}(x,t).
$$
Then, $f_R$ satisfies
\begin{align*}
\partial_t f_R +\Lambda f_R&=\sum_{\ell=0}^R \lambda^{\ell+1}\sum_{j=0}^{\ell-1} \Lambda\left(f^{(j)}\Lambda f^{(\ell-j-1)}\right)+\partial_x \left(f^{(j)}\partial_x f^{(\ell-j-1)}\right)\\
&=\sum_{\ell=1}^R\sum_{j=0}^{\ell-1} \Lambda\left(\lambda^{j+1}f^{(j)}\lambda^{\ell-j-1+1}\Lambda f^{(\ell-j-1)}\right)+\partial_x \left(\lambda^{j+1}f^{(j)}\lambda^{\ell-j-1+1}\partial_x f^{(\ell-j-1)}\right),
\end{align*}
We estimate
\begin{equation}\label{estl=0}
\lambda\|f^{(0)}\|_{A^1_{1}}\leq \|f_0\|_{A^1_1}.
\end{equation}
Define
$$
\nu(\ell)=R-\ell+1.
$$
Applying Tonelli's theorem, we find that
\begin{align*}
\|f^{(\ell)}\|_{A^1_{\nu(\ell)}}&\leq 2\sum_{j=0}^{\ell-1}\sum_{k=-\infty}^\infty\sum_{n=-\infty}^\infty\int_0^te^{-(t-s)|k|} |k| e^{\nu(\ell)|k|}| \widehat{f^{(j)}}(n,s)||n||k-n||\widehat{f^{(\ell-j-1)}}(k-n,s)|ds\\
&\leq 2\sum_{j=0}^{\ell-1}\sum_{k=-\infty}^\infty\sum_{n=-\infty}^\infty\int_0^t e^{(\nu(\ell)+1)|k|}| \widehat{f^{(j)}}(n,s)||n||k-n||\widehat{f^{(\ell-j-1)}}(k-n,s)|ds
\end{align*}
where we have used 
$$
|k|\leq e^{|k|}.
$$
Now we observe that, since $0\leq j \leq \ell-1$ and $0\leq \ell-j-1 \leq \ell-1$ 
$$
\nu(\ell)+1=R-(\ell-1)+1\leq R-j+1=\nu(j),
$$
and
$$
\nu(\ell)+1=R-(\ell-1)+1\leq R-(\ell-j-1)+1=\nu(\ell-j-1).
$$
Thus,
\begin{align*}
\|f^{(\ell)}(t)\|_{A^1_{\nu(\ell)}}&\leq 2\sum_{j=0}^{\ell-1}\sum_{k=-\infty}^\infty\sum_{n=-\infty}^\infty\int_0^t e^{\nu(j)|n|}e^{\nu(\ell-j-1)|k-n|}| \widehat{f^{(j)}}(n,s)||n||k-n||\widehat{f^{(\ell-j-1)}}(k-n,s)|ds\\
&\leq 2\sum_{j=0}^{\ell-1}\int_0^t \|f^{(j)}(s)\|_{A^{1}_{\nu(j)}}\|f^{(\ell-j-1)}(s)\|_{A^1_{\nu(\ell-j-1)}}ds.
\end{align*}
We define the numbers
$$
\mathscr{A}_\ell=2\|f^{(\ell)}(t)\|_{A^1_{\nu(\ell)}},\quad \mathscr{A}_0=1.
$$
These numbers satisfy
\begin{align*}
\mathscr{A}_\ell&\leq \sum_{j=0}^{\ell-1}\int_0^t \mathscr{A}_j \mathscr{A}_{\ell-j-1}ds.
\end{align*}
Then, we prove by induction that
$$
\mathscr{A}_\ell\leq \mathscr{C}_\ell t^\ell,
$$
where $\mathscr{C}_\ell$ are the Catalan numbers. It is known that the Catalan numbers grow like $4^\ell$. We also observe that
$$
\nu(\ell)\geq 1
$$
for $\ell\leq R$. As a consequence, we have the bound
\begin{align*}
\|f_R(t)\|_{A^1_1}&\leq \lambda \|f^{(0)}\|_{A^1_1} +\lambda \sum_{\ell=1}^R \lambda^\ell \|f^{(\ell)}(t)\|_{A^{1}_1}\\
&\leq \lambda \|f^{(0)}\|_{A^1_1}+\lambda \sum_{\ell=1}^R \lambda^\ell \|f^{(\ell)}(t)\|_{A^{1}_{\nu(\ell)}}\\
&\leq \lambda \|f^{(0)}\|_{A^1_1}+\frac{\lambda}{2} \sum_{\ell=1}^R \lambda^\ell \mathscr{A}_\ell\\
&\leq \lambda \|f^{(0)}\|_{A^1_1}+\frac{\lambda}{2} \sum_{\ell=1}^R \lambda^\ell\mathscr{C}_\ell t^\ell\\
&\leq \|f_0\|_{A^1_1}+\frac{\lambda}{2} \sum_{\ell=1}^\infty (4 \lambda t)^\ell,
\end{align*}
where we have used \eqref{estl=0}. We choose $\lambda=\|f_0\|_{A^1_1}$ and note that, for $t< 1/4\|f_0\|_{A^1_1}$, we obtain the bound
\begin{align*}
\|f_R(t)\|_{A^1_1}&\leq \|f_0\|_{A^1_1}+\frac{\|f_0\|_{A^1_1}}{2} \frac{4 \|f_0\|_{A^1_1} t}{1- 4 \|f_0\|_{A^1_1} t}.
\end{align*}
Thus, we have a bound
$$
f_R\in L^\infty(0,T;A^{1}_1).
$$
This bound is independent of $R$ and then, using Weierstrass $M$-theorem in the space $L^\infty(0,T;A^{1}_1)$, we can pass to the limit as $R\rightarrow\infty$ and obtain 
$$
\lim_{R\rightarrow\infty}f_R=f\in L^\infty(0,T;A^{1}_1).
$$
Furthermore, $f_R$ satisfies
\begin{align*}
\widehat{f_R}(k,t)=e^{-t|k|}\hat{f_0}+\sum_{\ell=1}^R\sum_{j=0}^{\ell-1}\sum_{n=-\infty}^\infty\int_0^te^{-(t-s)|k|}\left(\lambda^{j+1}\widehat{f^{(j)}}(n,s)\lambda^{\ell-j-1+1}\widehat{f^{(\ell-j-1)}}(k-n,s)\right)\left[|k||k-n|-k(k-n)\right].
\end{align*}
Then, using the Cauchy theorem for the product of series, the limit is a mild solution of our problem
\begin{align*}
\widehat{f}(k,t)=e^{-t|k|}\hat{f_0}+\sum_{n=-\infty}^\infty\int_0^te^{-(t-s)|k|}\left(\widehat{f}(n,s)\widehat{f}(k-n,s)\right)\left[|k||k-n|-k(k-n)\right].
\end{align*}

\subsection{Proof of Proposition \ref{thm:Decay}}
We start this section with an auxiliary Lemma that establishes an integral formula for the nonlinear term
\begin{lemma}\label{lemma1}The nonlinear term $\partial_x \left(\comm{\mathcal{H}}{u}\Lambda u\right)$ admits the representation
\begin{align*}
\partial_x \left([\mathcal{H},u]\Lambda u\right)&=\frac{1}{16\pi^2}\textnormal{p.v.}\int_\mathbb{T}\textnormal{p.v.}\int_\mathbb{T}\frac{u(x)-u(x-y)}{\sin^2(y/2)}\frac{u(x-y)-u(x-y-z)}{\sin^2(z/2)} \textnormal{d} z \textnormal{d} y+(\partial_x u(x))^2\\
&=\frac{1}{4\pi}\textnormal{p.v.}\int_\mathbb{T}\frac{u(x)-u(x-y)}{\sin^2(y/2)}\Lambda u(x-y)\textnormal{d} y+(\partial_x u(x))^2
\end{align*}
\end{lemma}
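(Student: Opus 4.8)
The plan is to reduce the stated pointwise identity to a short algebraic computation with Fourier multipliers, followed by an insertion of the standard singular-integral kernel of $\Lambda$ on the torus. I would work first with a smooth $u\in C^\infty(\mathbb{T})$ (and note that, since \eqref{eq:AD} preserves the mean, we may take $u$ mean-free and identify $\dot H^s$ with $H^s$), so that every principal value below converges and all manipulations are classical; the general case follows by density once the formula is established for smooth $u$. The operator facts I use are $\partial_x\mathcal H=\mathcal H\partial_x=\Lambda$ and $\mathcal H^2=-\mathrm{Id}$ on mean-free functions, the latter giving $\mathcal H\Lambda u=\mathcal H^2(\partial_x u)=-\partial_x u$ (as $\partial_x u$ is automatically mean-free), together with $\Lambda^2=-\partial_x^2$.

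\textbf{Step 1: an operator identity.} Writing $[\mathcal H,u]g=\mathcal H(ug)-u\,\mathcal Hg$ and combining the above with the Leibniz rule,
\begin{align*}
\partial_x\left([\mathcal H,u]\,\Lambda u\right)
&=\partial_x\left(\mathcal H(u\,\Lambda u)\right)-\partial_x\left(u\,\mathcal H\Lambda u\right)\\
&=\Lambda(u\,\Lambda u)-(\partial_x u)(\mathcal H\Lambda u)-u\,\Lambda^2 u\\
&=\Lambda(u\,\Lambda u)-u\,\Lambda^2 u+(\partial_x u)^2=[\Lambda,u]\,\Lambda u+(\partial_x u)^2 ,
\end{align*}
where $[\Lambda,u]g:=\Lambda(ug)-u\,\Lambda g$ and I used $\mathcal H\Lambda u=-\partial_x u$ in the last line. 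This already yields the term $(\partial_x u)^2$ of the statement, and it remains to identify $[\Lambda,u]\,\Lambda u$ with the asserted integral.

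\textbf{Step 2: the kernel of $\Lambda$ and conclusion.} Here I recall the classical singular-integral representation on $\mathbb{T}$,
\[
\Lambda g(x)=\frac{1}{4\pi}\,\textnormal{p.v.}\!\int_{\mathbb{T}}\frac{g(x)-g(x-y)}{\sin^2(y/2)}\,\textnormal{d}y ,
\]
which follows from $\Lambda=\partial_x\mathcal H$ together with the kernel $\frac{1}{2\pi}\cot(y/2)$ of $\mathcal H$ (and whose constant $1/(4\pi)$ is readily checked on $g(x)=e^{inx}$). Subtracting $u(x)$ times this identity from the same identity applied to $ug$, and using $u(x)g(x)-u(x-y)g(x-y)-u(x)g(x)+u(x)g(x-y)=\left(u(x)-u(x-y)\right)g(x-y)$, gives
\[
[\Lambda,u]g(x)=\frac{1}{4\pi}\,\textnormal{p.v.}\!\int_{\mathbb{T}}\frac{\left(u(x)-u(x-y)\right)g(x-y)}{\sin^2(y/2)}\,\textnormal{d}y .
\]
Taking $g=\Lambda u$ and inserting this into Step 1 gives the second representation in the statement; replacing $\Lambda u(x-y)$ inside it by $\frac{1}{4\pi}\,\textnormal{p.v.}\!\int_{\mathbb{T}}\frac{u(x-y)-u(x-y-z)}{\sin^2(z/2)}\,\textnormal{d}z$ reproduces the first, the constant $1/(16\pi^2)$ arising as the square of $1/(4\pi)$.

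\textbf{Expected main difficulty.} The algebra of Step 1 is immediate; the real care lies in Step 2, namely justifying that the single p.v. integral may be split into two convergent p.v. integrals, that the iterated principal value in the double-integral form makes sense and its order of integration is immaterial, and that differentiation passes under the integral. What makes all of this routine rather than delicate is that the commutator lowers the order of the kernel: $\left(u(x)-u(x-y)\right)/\sin^2(y/2)\sim 4\,\partial_x u(x)/y$ near $y=0$, so $[\Lambda,u]\,\Lambda u$ is controlled by an odd, $1/y$-type Cauchy kernel and every principal value is elementary. Accordingly I would carry out each step for smooth $u$ via symmetric $\varepsilon$-truncations and $\varepsilon\to0$, pin the normalizing constants of the torus kernels of $\mathcal H$ and $\Lambda$ by testing on exponentials, and only at the end invoke density of smooth functions in the relevant space to obtain the lemma in full generality.
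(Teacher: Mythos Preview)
Your argument is correct and takes a genuinely different route from the paper. The paper works directly at the singular-integral level: it writes $\partial_x\!\left([\mathcal H,u]\Lambda u\right)$ as a double $\textnormal{p.v.}$ integral with the Hilbert kernels $\cot(y/2)$ and $\cot(z/2)$, differentiates under the integrals to obtain three pieces $I_1,I_2,I_3$, identifies $I_2=(\partial_x u)^2$ via $\mathcal H^2=-\mathrm{Id}$, and then integrates by parts in $I_1$ (turning $-\partial_y u(x-y)/\tan(y/2)$ into a boundary-free term with kernel $1/\sin^2(y/2)$) so that the second-derivative contribution cancels $I_3$ exactly and the desired $\sin^{-2}$ double integral emerges. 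You instead collapse the whole computation to the operator identity $\partial_x\!\left([\mathcal H,u]\Lambda u\right)=[\Lambda,u]\Lambda u+(\partial_x u)^2$ at the multiplier level, and only then insert the kernel of $\Lambda$. Your route is shorter and conceptually cleaner: the cancellation the paper extracts through integration by parts in the $y$-variable is exactly your use of $\partial_x\mathcal H=\Lambda$ and $\mathcal H\Lambda=-\partial_x$, so no manipulation of coupled principal values is needed. The paper's approach, on the other hand, is self-contained at the level of kernels and does not presuppose the $\sin^{-2}$ representation of $\Lambda$, deriving it along the way.
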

\begin{proof}[Proof of Lemma \ref{lemma1}]
First, we observe that
$$
\comm{\mathcal{H}}{f}g=\frac{1}{2\pi}\textnormal{p.v.}\int_\mathbb{T}\frac{ \pare{ f(x-y)-f(x)}g(x-y)}{\tan(y/2)}\textnormal{d} y,
$$
\begin{equation}\label{auxiliar}
\mathcal{H}\left(\mathcal{H}g\right)(x)=\frac{1}{2\pi}\textnormal{p.v.}\int_\mathbb{T}\frac{\cH g(x-y)}{\tan(y/2)}\textnormal{d} y=\frac{1}{4\pi^2}\textnormal{p.v.}\int_\mathbb{T}\textnormal{p.v.}\int_\mathbb{T}\frac{g(x-y-z)}{\tan(z/2)}\frac{1}{\tan(y/2)}\textnormal{d} z \ \textnormal{d} y.
\end{equation}
Thus, we obtain that
\begin{align*}
\partial_x \left(\comm{\mathcal{H}}{u}\Lambda u\right)&=\frac{\partial_x}{2\pi}\textnormal{p.v.}\int_\mathbb{T}\frac{u(x-y)-u(x)}{\tan(y/2)}\mathcal{H} \partial_x u(x-y)\textnormal{d} y\\
&=\frac{\partial_x}{4\pi^2}\textnormal{p.v.}\int_\mathbb{T}\frac{u(x-y)-u(x)}{\tan(y/2)}\textnormal{p.v.}\int_{\mathbb{T}} \frac{\partial_x u(x-y-z)}{\tan(z/2)} \textnormal{d} z \textnormal{d} y\\
&=\frac{\partial_x}{4\pi^2}\textnormal{p.v.}\int_\mathbb{T}\textnormal{p.v.}\int_\mathbb{T}\frac{u(x-y)-u(x)}{\tan(y/2)}\frac{\partial_x u(x-y-z)}{\tan(z/2)} \textnormal{d} z \textnormal{d} y\\
&= I_1+I_2+I_3,
\end{align*}
where
$$
I_1=\frac{1}{4\pi^2}\textnormal{p.v.}\int_\mathbb{T}\textnormal{p.v.}\int_\mathbb{T}\frac{\partial_x u(x-y)}{\tan(y/2)}\frac{\partial_x u(x-y-z)}{\tan(z/2)} \textnormal{d} z \textnormal{d} y,
$$
$$
I_2=-\frac{\partial_x u(x)}{4\pi^2}\textnormal{p.v.}\int_\mathbb{T}\textnormal{p.v.}\int_\mathbb{T}\frac{1}{\tan(y/2)}\frac{\partial_x u(x-y-z)}{\tan(z/2)} \textnormal{d} z \textnormal{d} y,
$$
$$
I_3=\frac{1}{4\pi^2}\textnormal{p.v.}\int_\mathbb{T}\textnormal{p.v.}\int_\mathbb{T}\frac{u(x-y)-u(x)}{\tan(y/2)}\frac{\partial_x^2 u(x-y-z)}{\tan(z/2)} \textnormal{d} z \textnormal{d} y.
$$
We have that
\begin{align*}
I_1&=\frac{1}{4\pi^2}\textnormal{p.v.}\int_\mathbb{T}\textnormal{p.v.}\int_\mathbb{T}\frac{-\partial_y u(x-y)}{\tan(y/2)}\frac{\partial_x u(x-y-z)}{\tan(z/2)} \textnormal{d} z \textnormal{d} y\\
&=\frac{1}{4\pi^2}\textnormal{p.v.}\int_\mathbb{T}\textnormal{p.v.}\int_\mathbb{T}\frac{\partial_y\left(u(x)-u(x-y)\right)}{\tan(y/2)}\frac{\partial_x u(x-y-z)}{\tan(z/2)} \textnormal{d} z \textnormal{d} y\\
&=\frac{1}{8\pi^2}\textnormal{p.v.}\int_\mathbb{T}\textnormal{p.v.}\int_\mathbb{T}\frac{u(x)-u(x-y)}{\sin^2(y/2)}\frac{\partial_x u(x-y-z)}{\tan(z/2)} \textnormal{d} z \textnormal{d} y\\
&\quad-\frac{1}{4\pi^2}\textnormal{p.v.}\int_\mathbb{T}\textnormal{p.v.}\int_\mathbb{T}\frac{u(x)-u(x-y)}{\tan(y/2)}\frac{\partial_x\partial_y u(x-y-z)}{\tan(z/2)} \textnormal{d} z \textnormal{d} y\\
&=\frac{1}{8\pi^2}\textnormal{p.v.}\int_\mathbb{T}\textnormal{p.v.}\int_\mathbb{T}\frac{u(x)-u(x-y)}{\sin^2(y/2)}\frac{\partial_x u(x-y-z)}{\tan(z/2)} \textnormal{d} z \textnormal{d} y\\
&\quad+\frac{1}{4\pi^2}\textnormal{p.v.}\int_\mathbb{T}\textnormal{p.v.}\int_\mathbb{T}\frac{u(x)-u(x-y)}{\tan(y/2)}\frac{\partial_x^2 u(x-y-z)}{\tan(z/2)} \textnormal{d} z \textnormal{d} y\\
&=\frac{1}{8\pi^2}\textnormal{p.v.}\int_\mathbb{T}\textnormal{p.v.}\int_\mathbb{T}\frac{u(x)-u(x-y)}{\sin^2(y/2)}\frac{-\partial_z u(x-y-z)}{\tan(z/2)} \textnormal{d} z \textnormal{d} y-I_3\\
&=\frac{1}{16\pi^2}\textnormal{p.v.}\int_\mathbb{T}\textnormal{p.v.}\int_\mathbb{T}\frac{u(x)-u(x-y)}{\sin^2(y/2)}\frac{u(x-y)-u(x-y-z)}{\sin^2(z/2)} \textnormal{d} z \textnormal{d} y-I_3.
\end{align*}
Using that, for zero mean functions,
$$
\mathcal{H}(\mathcal{H}g)=-g,
$$
together with \eqref{auxiliar}, we find that 
$$
I_2=(\partial_x u(x))^2.
$$
Thus, collecting every term, we conclude the result.
\end{proof}
\begin{proof}[Proof of Proposition \ref{thm:Decay}]
Define $X_t$ such that
$$
M(t)=f(X_t,t)=\max_{y}f(y,t).
$$
We observe that $M(t)$ is $a.e.$ differentiable. To see that take two $t \le s$ and assume that $M (t) \ge M(s)$, then
\[
|M (t) - M(s)| = M (t) - M(s) = f(X_t,t) - f(X_s,s) \le f(X_t,t) \pm f(X_t,s)- f(X_s,s)\leqslant f(X_t,t) - f(X_t,s).
\]
As a consequence, the boundedness of $\norm{\partial_t f}_{L^\infty}$ is a sufficient condition for $M(t)$ to be Lipschitz. Now Rademacher Theorem gives us the a.e. differentiability of $M$. Furthermore, one can prove that the derivative verifies the following equality
$$
\ddt M(t)=\partial_t f(X_t,t)
$$
almost everywhere in time. Using Lemma \ref{lemma1}, we find that
$$
\ddt M(t)= \frac{1}{4\pi}\int_{\mathbb{T}}\frac{M(t)-u(X_t-y)}{\sin^2(y/2)}\left(\Lambda u(X_t-y)-1\right)\textnormal{d} y.
$$
Using that
$$
|\Lambda u|\leqslant \norm{u(t)}_{A^1},
$$
we conclude 
$$
\max_{y}f(y,t)\leqslant \max_{y}f_0(y).
$$
If we define $x_t$ such that
$$
m(t)=f(x_t,t)=\min_{y}f(y,t),
$$
we can repeat the previous steps and conclude the result.
\end{proof}

\subsection{Proof of Theorem \ref{thm:GWP_A1}}
\subsubsection*{Step 1: \emph{a priori estimates}} We start the proof providing the appropriate \emph{a priori} estimates in $A^1$. We have that
$$
\ddt \norm{f(t)}_{A^1}+\norm{f(t)}_{A^2}\leqslant\norm{\partial_{x}^2 \left(\comm{\mathcal{H}}{f}\Lambda f\right)}_{A^0}.
$$
In Fourier variables, we have that
\begin{align*}
\widehat{\partial_{x}^2 \left(\comm{\mathcal{H}}{f}\Lambda f\right)}&=\widehat{\partial_x \Lambda(f\Lambda f)+\partial_x^2(f\partial_x f)}\\
&=\hat{f}(k-m)\hat{f}(m)p(k,m),
\end{align*}
with
$$
p(k,m)=ik\left(|k||k-m|-k(k-m)\right)=ik|k||k-m|\left(1-\frac{k(k-m)}{|k||k-m|}\right).
$$
We observe that $p\neq0$ if and only if $0<|k|<|m|$. We find that
$$
|p(k,m)|\leqslant |k|2|k||k-m|\leq 2|m|^2|k-m|.
$$
Thus,
\begin{align*}
\norm{\partial_{x}^2 \left(\comm{\mathcal{H}}{f}\Lambda f\right)}_{A^0}&\leqslant 2\norm{f}_{A^1}\norm{f}_{A^2}.
\end{align*}
As a consequence, we find the inequality
$$
\ddt \norm{f(t)}_{A^1}+\norm{f(t)}_{A^2}\left(1-2\norm{f(t)}_{A^1}\right)\leqslant 0.
$$

\subsubsection*{Step 2: Approximated solutions and passing to the limit} 

To construct solutions we consider the regularized problem where the initial data is localized in Fourier space. In other words, given $f_0$, we consider \eqref{eq:AD0} with the initial data $f_0^N$ where
$$
f_0^N=\sum_{n=-N}^N\hat{f_0}(n)e^{inx}.
$$
Then, invoking Theorem \ref{thm:analytic}, there exists a local analytic solution which also enjoys continuity in time. For this local solution we can apply the previous energy estimates and, consequently, we can pass to the limit.

\subsubsection*{Step 3: Uniqueness} 

We argue by contradiction: let's assume that there exist two different solutions $f_1$ and $f_2$ starting from the same initial data. Then, one can prove that the difference $g=f_1-f_2$ satisfies
$$
\ddt \|g\|_{A^0}+\|g\|_{A^1}\leq \|g\|_{A^1}(\|f_1\|_{A^1}+\|f_2\|_{A^1}). 
$$
Then, one concludes the uniqueness using Gronwall's inequality.

\subsubsection*{Step 4: Decay} We have that
$$
\ddt \norm{f(t)}_{A^0}+\norm{f(t)}_{A^1}\leqslant\norm{\partial_{x} \left(\comm{\mathcal{H}}{f}\Lambda f\right)}_{A^0}.
$$
Recalling \eqref{commFourier}, we find that
\begin{align*}
\norm{\partial_{x} \left(\comm{\mathcal{H}}{f}\Lambda f\right)}_{A^1}&\leqslant 2\norm{f}_{A^1}^2.
\end{align*}
As a consequence, we find the inequality
$$
\ddt \norm{f(t)}_{A^0}+\norm{f(t)}_{A^1}\left(1-2\norm{f(t)}_{A^1}\right)\leqslant 0.
$$
Using the Poincar\'e-like inequality
$$
\norm{f(t)}_{A^0}\leqslant \norm{f(t)}_{A^1},
$$
we find the exponential decay
$$
\norm{f(t)}_{A^0}\leqslant \norm{f_0}_{A^0}e^{-\left(1-2\norm{f_0}_{A^1}\right)t}.
$$

\subsection{Proof of Theorem \ref{thm:GWP_H3/2}}
\subsubsection*{Step 1: a priori estimates}
Equipped with the local existence of smooth solution emanating from an analytic initial data, the proof of  Theorem \ref{thm:GWP_H3/2} reduces to the derivation of appropriate energy estimates in the right Sobolev spaces. Thus, we consider $ f $ to be a (space-time) smooth solution of the periodic problem \eqref{eq:AD0}.

\subsubsection*{ Step 1.1: $ L^2 $ estimates.}
As the main nonlinear cancellation is already present at the $ L^2\pare{\bT} $ level, for the sake of clarity, we start with the global estimate for this low energy norm. Thus, our first goal is to prove that, if $\norm{f \pare{t}}_{H^{3/2}\pare{\bT}}$ is small enough, then
\begin{equation}\label{eq:L2_enest}
\norm{f\pare{t}}_{L^2\pare{\bT}}^2 + \int_0^t \norm{\Lambda^{1/2} f\pare{s}}_{L^2\pare{\bT}}^2 \dt \leqslant \norm{f_0}_{L^2 \pare{\bT}}^2. 
\end{equation}
Let us multiply equation \eqref{eq:AD0} for $ f $ and integrate by parts. Then, we deduce
\begin{equation}\label{eq:L2_enest1}
\begin{aligned}
\frac{1}{2} \ddt
 \norm{f\pare{t}}_{L^2\pare{\bT}}^2 + \norm{\Lambda^{1/2} f\pare{t}}_{L^2\pare{\bT}}^2 & = \int _{\bT} f \pare{\pare{\Lambda f}^2 - \pare{\partial_x f}^2} \dx, \\
 & =  \int _{\bT}f \ \pare{\cH-1}\partial_x f \ \pare{\cH +1}\partial_x f \ \dx, \\
 & = \cN \pare{f, f, f}, 
\end{aligned}
\end{equation}
where 
\begin{equation}\label{eq:def_cN}
{ \cN \pare{g_1, g_2, g_3}} = \int _{\bT}g_1 \ \pare{\cH-1}\partial_x g_2 \ \pare{\cH +1}\partial_x g_3 \ \dx. 
\end{equation}
In order to conclude the inequality \eqref{eq:L2_enest}, we have to prove that following inequality holds true
\begin{equation}\label{lem:commutation}
\av{\cN\pare{g, h, h}} \leqslant C \norm{g}_{H^{3/2}} \norm{h}_{H^{1/2}}^2. 
\end{equation}
Indeed, if we denote as $ \pare{\hat{g}_n}_{n\in\bZ}, \ \pare{\hat{h}_n}_{n\in\bZ} $ the Fourier transform of $ g $ and $ h $ respectively and use Plancherel Theorem, we can rewrite $ \cN\pare{g, h, h} $ as\begin{equation*}
\begin{aligned}
\cN\pare{g, h, h} & =
 \sum_{\substack{n\in\bZ \\ k+m=n}} \overline{ \hat{g}_n} \pare{-i \sgn m -1} \pare{i m} \hat{h}_m \ \pare{-i \sgn k +1} \pare{ik}\hat{h}_k , \\
& =
 \sum_{\substack{n\in\bZ \\ k+m=n}} \overline{ \hat{g}_n}  m k \ \hat{h}_m \hat{h}_k \pare{1+ \sgn m \ \sgn k}. 
\end{aligned}
\end{equation*}
We observe that, if the addends are nonzero, $k$ and $n$ must verify
\begin{align*}
0 \leqslant k \leqslant n &&\text{or} && n \leqslant k \leqslant 0. 
\end{align*}
In other words, 
$$
|k|\leq |n|.
$$
Furthermore, using the previous cancellation to find a symmetry in the series, we find that
\begin{equation}\label{eq:cN_simplified}
\cN\pare{g, h, h} =  4 \sum_{0\leqslant k \leqslant n} \pare{n-k} k \ \textnormal{Re}\pare{\overline{ \hat{g}_n} \hat{h}_{n-k}\hat{h}_k}. 
\end{equation}
Thus,
\begin{equation*}
\av{\cN\pare{g, h, h} } \leqslant 4\sum_{0\leqslant k \leqslant n} \av{n-k} |k| \ \av{\hat{g}_n} \av{\hat{h}_{n-k}} \av{\hat{h}_k}, 
\end{equation*}
but since $ 0\leqslant k\leqslant n $ we deduce that 
$$ 
\pare{n-k}k\leqslant n^{3/2}\pare{n-k}^{1/4} k^{1/4},
$$ from where, using the $\ell^2-\ell^2$ Cauchy-Schwartz inequality, we can obtain the following
\begin{equation}\label{eq:cN_ineq1}
\begin{aligned}
\av{\cN\pare{g, h, h} } & \leqslant 4 \pare{\sum_n n^3 \av{\hat{g}_n}^2}^{1/2} \pare{\sum_n \av{\sum_k \pare{n-k}^{1/4}\av{\hat{h}_{n-k}}\ k^{1/4}\av{\hat{h}_{k}}}^2}^{1/2}. 
\end{aligned}
\end{equation}
Let us define the following distribution
\begin{equation*}
H \pare{x} = \cF^{-1}\pare{ \pare{\av{\hat{h}_n}}_n }\pare{x}. 
\end{equation*}
We observe that, as long as $ \pare{\av{\hat{h}_n}}_n\in \ell^2 $, $ H $ is a $ L^2 $ function. We prove now that if $ h\in H^{3/2} $ then $  \pare{\av{\hat{h}_n}}_n\in \ell^1  $, so the embedding $ \ell^1\hra \ell^2 $ closes the argument. Indeed
\begin{equation*}
\begin{aligned}
\sum_n \av{\hat{h}_n} & \leqslant \pare{\sum_n \av{n}^3 \av{\hat{h}_n}^2}^{1/2} \pare{\sum_n \av{n}^{-3}}^{1/2}, \\
& \leqslant C \norm{h}_{H^{3/2}}. 
\end{aligned}
\end{equation*}
Using the auxiliary distribution $ H $ we can rewrite \eqref{eq:cN_ineq1} as
\begin{equation}\label{eq:cN_ineq2}
\av{\cN\pare{g, h, h}} \leqslant 4 \norm{g}_{H^{3/2}} \norm{\Lambda^{1/4} H \ \Lambda^{1/4} H}_{L^2}. 
\end{equation}
Using H\"older inequality and Sobolev embeddings we deduce that
\begin{equation*}
\norm{\Lambda^{1/4} H \ \Lambda^{1/4} H}_{L^2}\leqslant \norm{H}_{\dot{W}^{\frac{1}{4}, 4}}^2 \leqslant C \norm{H}_{H^{1/2}}^2, 
\end{equation*}
but
\begin{equation*}
\norm{H}_{H^{1/2}}^2 = \sum_n \av{n} \av{\hat{H}_n}^2= \sum_n \av{n} \av{\hat{h}_n}^2 = \norm{h}_{H^{1/2}}^2,
\end{equation*}
whence we deduce from \eqref{eq:cN_ineq2} that
\begin{equation*}
\av{\cN\pare{g, h, h}} \leqslant C \norm{g}_{H^{3/2}}\norm{h}_{H^{1/2}}^2. 
\end{equation*}
As a consequence, we find that \eqref{eq:L2_enest1} now reads
\begin{equation}\label{eq:L2_enest2}
\begin{aligned}
\frac{1}{2} \ddt
 \norm{f\pare{t}}_{L^2\pare{\bT}}^2 + \norm{\Lambda^{1/2} f\pare{t}}_{L^2\pare{\bT}}^2 & \leq C \norm{f}_{H^{3/2}}\norm{f}_{H^{1/2}}^2, 
\end{aligned}
\end{equation}
from where we find the desider inequality \eqref{eq:L2_enest}.

We need now to find the appropriate bound for the higher order energy given by the $H^{3/2}$ norm. 

\subsubsection*{ Step 1.2: dyadic estimates.} Let us apply the truncation operator $ \tqS $ to the equation \eqref{eq:AD0}. We then multiply the resulting equation for $ \tqS f $ and integrate in space to obtain: 
\begin{equation}\label{eq:evolution_dyadic_AD0}
\frac{1}{2} \ddt \norm{\tqS f}_{L^2}^2  + \norm{\tqS \Lambda^{1/2} f}_{L^2}^2  = A_q-B_q, 
\end{equation}
with
$$
A_q=\int \tqS \pare{f \ \Lambda f} \tqS \Lambda f \dx,
$$
$$
B_q=\int\tqS \pare{f \ \partial_x f }\tqS \partial_x f \ \dx.
$$
Using now Bony decomposition \eqref{bony decomposition asymmetric}, we can split the previous terms as follows
\begin{align*}
A_q & = T^A_{1, q} + T^A_{2, q } + T^A_{3, q } + R^A_q, \\
B_q & = T^B_{1, q} + T^B_{2, q } + T^B_{3, q } + R^B_q,
\end{align*}
where
\begin{align*}
T^A_{1, q} & = \int S_{q-1} f \ \pare{\tqS \Lambda f}^2 \dx, &
T^B_{1, q} & = \int S_{q-1} f \ \pare{\tqS \partial_x f}^2 \dx, \\
T^A_{2, q} & = \sumf \int \bra{\tqS, \SQ f}\triangle_{q'} \Lambda f \  \tqS \Lambda f \ \dx, & 
T^B_{2, q} & = \sumf \int \bra{\tqS, \SQ f}\triangle_{q'} \partial_x f \ \tqS \partial_x f \ \dx, \\
T^A_{3, q} & = \sumf \int \pare{\SQ - \Sq}f \ \tqS \triangle_{q'} \Lambda f \ \tqS \Lambda f \ \dx, &
T^B_{3, q} & = \sumf \int \pare{\SQ - \Sq}f \ \tqS \triangle_{q'} \partial_x f \ \tqS \partial_x f\ \dx,\\
R^A_q & = \sumi \int \tqS \pare{\tQS f \ S_{q'+2}\Lambda f} \ \tqS \Lambda f \dx, &
R^B_q & = \sumi \int \tqS \pare{\tQS f \ S_{q'+2}\partial_x f} \ \tqS \partial_x f \dx. 
\end{align*}

We have to estimate these eight terms. The terms $ T^A_1 $ and $ T^B_1 $ are the more singular. We will make use of the commutation properties \eqref{lem:commutation} to estimate such terms. First of all we remark that
\begin{equation*}
T^A_{1, q} - T^B_{1, q} = \cN\pare{S_{q-1} f, \tqS  f, \tqS  f},
\end{equation*}
where the trilinear operator $ \cN $ is defined in \eqref{eq:def_cN}. We can hence use inequality \eqref{lem:commutation} and \eqref{regularity_dyadic} in order to deduce 
\begin{equation}\label{eq:BonyTA1-TA2}
\begin{aligned}
\av{T^A_{1, q} - T^B_{1, q}} & \leqslant C \norm{f}_{H^{3/2}} \norm{\tqS\Lambda^{1/2} f}_{L^2}^2, \\
& \leqslant  C b_q 2^{-2qs} \norm{f}_{H^{3/2}} \norm{ \Lambda^{1/2} f}_{H^{s}}^2, 
\end{aligned}
\end{equation}
for any $ s\geqslant 3/2 $. 

Next we handle the remainder terms $ R^A_q, R^B_q $. Indeed, we have that 
\begin{equation*}
\begin{aligned}
R^A_q & = \sumi \int \tqS \pare{\tQS f \ S_{q'+2}\Lambda f} \ \tqS \Lambda f \dx, \\
& \leqslant \sumi \norm{\tQS f }_{L^{\frac{2p}{	p-2}}} \norm{S_{q'+2} \Lambda f}_{L^p} \norm{\tqS \Lambda f}_{L^2}. 
\end{aligned}
\end{equation*}
Since $ H^{\frac{1}{2}} \pare{\bT}\hra L^p\pare{\bT} $ for any $ p\in\left[2, \infty \right) $ we can say that
\begin{equation*}
\norm{S_{q'+2} \Lambda f}_{L^p} \lesssim \sqrt{p} \norm{f}_{H^{3/2}}. 
\end{equation*}
Moreover due to the localization in the Fourier space and the fact that $ q'>q-4 $ we deduce 
\begin{equation*}
\begin{aligned}
\norm{\tQS f }_{L^{\frac{2p}{	p-2}}}  \norm{\tqS \Lambda f}_{L^2} & \lesssim \norm{\tQS f }_{L^{\frac{2p}{	p-2}}} 2^{q/2} \norm{\tqS \Lambda^{1/2} f}_{L^2}, \\
& \lesssim 2^{q'/2}\norm{\tQS f }_{L^{\frac{2p}{	p-2}}}  \norm{\tqS \Lambda^{1/2} f}_{L^2}, \\
& \lesssim \norm{\tQS \Lambda^{1/2} f }_{L^{\frac{2p}{	p-2}}}  \norm{\tqS \Lambda^{1/2} f}_{L^2}.
\end{aligned}
\end{equation*}
 Using \eqref{regularity_dyadic} we deduce that
\begin{equation}\label{eq:BonyRA-1}
R^A_q \lesssim b_q 2^{-2 q s}\sqrt{p} \norm{f}_{H^{3/2}}\norm{\Lambda^{1/2}f}_{H^{s}} \norm{\Lambda^{1/2}f}_{\dot{W}^{s, \frac{2p}{	p-2} }}. 
\end{equation}

Using the Sobolev inequality \eqref{eq:Sobolev_emb_with_constant}, we can also obtain the following estimate for the term $ R^A_q $: 

\begin{equation}
\label{eq:BonyRA-2}
\begin{aligned}
R^A_q & = \sumi \norm{S_{q'+2} \Lambda f}_{L^\infty}  \norm{\tQS \Lambda^{1/2} f }_{L^{2}}  \norm{\tqS \Lambda^{1/2} f}_{L^2}, \\
& \lesssim b_q 2^{-2qs}  \frac{1}{\delta}  \norm{f}_{H^{\frac{3}{2} + \delta}}\norm{\Lambda^{1/2}f}_{H^s}^2,
\end{aligned}
\end{equation}
for any $ 0<\delta \ll 1 $.

A very similar procedure allows us to deduce the following bound
\begin{align}\label{eq:BonyRB-1}
R^B_q & \lesssim b_q 2^{-2 q s} \sqrt{p} \norm{f}_{H^{3/2}}\norm{\Lambda^{1/2}f}_{H^{s}} \norm{\Lambda^{1/2}f}_{\dot{W}^{s, \frac{2p}{	p-2} }}, \\
R^B_q &\lesssim b_q 2^{-2qs}\frac{1}{\delta}   \norm{f}_{H^{\frac{3}{2} + \delta}}\norm{\Lambda^{1/2}f}_{H^s}^2. \label{eq:BonyRB-2}
\end{align}

We turn now our attention to the element $ T^A_2 $. Using Lemma \ref{estimates commutator} we obtain
\begin{equation}
\label{eq:BonyTA2-1}
\begin{aligned}
\av{T^A_{2, q}} & \leq  \sumf \norm{\bra{\tqS, S_{q'-1}f} \triangle_{q'} \Lambda f}_{L^2} \norm{\tqS\Lambda f}_{L^2}, \\
& \leq   \sumf 2^{-q} \norm{\tqS \Lambda f}_{L^2} \norm{\Lambda f}_{L^p}\norm{\triangle_{q'}\Lambda f}_{L^{\frac{2p}{p-2}}}, \\
&\lesssim \sumf  \norm{\tqS \Lambda^{1/2} f}_{L^2} \norm{\Lambda f}_{L^p}\norm{\triangle_{q'}\Lambda^{1/2} f}_{L^{\frac{2p}{p-2}}}, \\
& \lesssim b_q 2^{-2 q s} \sqrt{p} \norm{f}_{H^{3/2}}\norm{\Lambda^{1/2}f}_{H^{s}} \norm{\Lambda^{1/2}f}_{\dot{W}^{ s , \frac{2p}{	p-2} }}. 
\end{aligned}
\end{equation}
As a consequence, we can conclude the following bound:
\begin{equation}\label{eq:BonyTA2-2}
\av{T^A_{2, q}} \lesssim b_q 2^{-2qs} \frac{1}{\delta}   \norm{f}_{H^{\frac{3}{2} + \delta}} \norm{\Lambda^{1/2}f}_{H^s}^2. 
\end{equation}

Similarly, we can deduce the bounds
\begin{align}\label{eq:BonyTB2-1}
\av{T^B_{2, q}} & \lesssim b_q 2^{-2 q s} \sqrt{p} \norm{f}_{H^{3/2}}\norm{\Lambda^{1/2}f}_{H^{s}} \norm{\Lambda^{1/2}f}_{\dot{W}^{ s , \frac{2p}{	p-2} }}, \\
\av{T^B_{2, q}} & \lesssim b_q 2^{-2qs} \frac{1}{\delta}   \norm{f}_{H^{\frac{3}{2} + \delta}} \norm{\Lambda^{1/2}f}_{H^s}^2. \label{eq:BonyTB2-2}
\end{align}

The estimates 
\begin{equation}\label{eq:BonyTA3}
\av{T^A_{3, q}} + \av{T^B_{3, q}} \lesssim
b_q 2^{-2 q s} \norm{f}_{H^{3/2}}\norm{\Lambda^{1/2}f}_{H^{s}}^2, 
\end{equation}
can be easily obtained since the terms composing the elements $ T^A_{3, q}, \ T^B_{3, q} $ are all localized in dyadic annuli, and hence such terms are more regular than the ones estimated above. We note that
\begin{equation*}
\frac{1}{2} \ddt \norm{\tqS f}_{L^2}^2  + \norm{\tqS \Lambda^{1/2} f}_{L^2}^2  \leqslant C b_q 2^{-2 q s} \bra{ \norm{f}_{H^{3/2}}\norm{\Lambda^{1/2}f}_{H^{s}}^2 \\
+  \sqrt{p} \norm{f}_{H^{3/2}}\norm{\Lambda^{1/2}f}_{H^{s}} \norm{\Lambda^{1/2}f}_{\dot{W}^{ s , \frac{2p}{p-2} }}},
\end{equation*}
where we have used \eqref{eq:BonyTA1-TA2}, \eqref{eq:BonyRA-1}, \eqref{eq:BonyRB-1}, \eqref{eq:BonyTA2-1}, \eqref{eq:BonyTB2-1} and \eqref{eq:BonyTA3}. Now, multiplying the above equation for $ 2^{2qs} $ and summing in $ q\in \bZ $, we obtain
\begin{equation}\label{eq:Hs_est1}
\frac{1}{2} \ddt \norm{f}_{\Hs}^2 + \norm{\Lambda^{1/2} f }_{\Hs}^2 \leqslant C  \bra{ \norm{f}_{H^{3/2}}\norm{\Lambda^{1/2}f}_{H^{s}}^2 
+  \sqrt{p} \norm{f}_{H^{3/2}}\norm{\Lambda^{1/2}f}_{H^{s}} \norm{\Lambda^{1/2}f}_{\dot{W}^{ s , \frac{2p}{p-2} }}} .
\end{equation}
Using \eqref{eq:BonyTA1-TA2}, \eqref{eq:BonyRA-2}, \eqref{eq:BonyRB-2}, \eqref{eq:BonyTA2-2}, \eqref{eq:BonyTB2-2} and \eqref{eq:BonyTA3} instead we deduce
\begin{equation}\label{eq:Hs_est2}
\frac{1}{2} \ddt \norm{f}_{\Hs}^2 + \norm{\Lambda^{1/2} f }_{\Hs}^2 \leqslant C  \bra{ \norm{f}_{H^{3/2}} 
+  \frac{1}{\delta}  \norm{f}_{H^{\frac{3}{2} + \delta}} \Big.  } \norm{\Lambda^{1/2}f}_{H^s}^2 .
\end{equation}

\subsubsection*{Step 1.3: $ H^{3/2} $ estimates} We can now simply consider the estimate \eqref{eq:Hs_est1} in which we set $ s=3/2 $ and
\begin{equation*}
p = p_\varepsilon =\frac{1}{\varepsilon}, 
\end{equation*}
in order to deduce
\begin{equation*}
\frac{1}{2} \ddt \norm{f}_{ H^{3/2} }^2 + \norm{\Lambda^{1/2} f }_{ H^{3/2}}^2 \leqslant C  \pare{ \norm{f}_{H^{3/2}}\norm{\Lambda^{1/2}f}_{ H^{3/2}}^2 
+  \frac{1}{\sqrt{\varepsilon}} \norm{f}_{H^{3/2}}\norm{\Lambda^{1/2}f}_{ H^{3/2}} \norm{\Lambda^{1/2}f}_{\dot{W}^{ \frac{3}{2} , \frac{2p}{p-2} }}}. 
\end{equation*}
Next we remark that $ H^{\frac{3}{2} + \varepsilon }\hra\dot{W}^{ \frac{3}{2} , \frac{2p}{p-2} }$, whence
\begin{equation}\label{eq:H3/2_est_final}
\frac{1}{2} \ddt \norm{f}_{ H^{3/2} }^2 + \norm{\Lambda^{1/2} f }_{ H^{3/2}}^2 \leqslant C  \pare{ \norm{f}_{H^{3/2}}\norm{\Lambda^{1/2}f}_{ H^{3/2}}^2 
+  \frac{1}{\sqrt{\varepsilon}} \norm{f}_{H^{3/2}}\norm{\Lambda^{1/2}f}_{ H^{3/2}} \norm{\Lambda^{1/2}f}_{ H^{\frac{3}{2} + \varepsilon} }}. 
\end{equation}

\subsubsection*{Step 1.4: $ H^{\frac{3}{2} + \varepsilon} $ estimates} We invoke the estimate \eqref{eq:Hs_est2} with $ s=\frac{3}{2} + \varepsilon $ and $ 0<\delta\ll \varepsilon $:
\begin{equation*}
\frac{1}{2} \ddt \norm{f}_{H^{\frac{3}{2} + \varepsilon}}^2 + \norm{\Lambda^{1/2} f }_{H^{\frac{3}{2} + \varepsilon}}^2 \leqslant C  \pare{ \norm{f}_{H^{3/2}} 
+   \frac{1}{\delta} \norm{f}_{H^{\frac{3}{2} + \delta}}} \norm{\Lambda^{1/2}f}_{H^{\frac{3}{2} + \varepsilon}}^2 .  
\end{equation*}
Using interpolation between Sobolev spaces we deduce
\begin{equation*}
\norm{f}_{H^{\frac{3}{2} + \delta}} \leqslant \norm{f}_{H^{3/2}}^{1-\frac{\delta}{\varepsilon}} \norm{f}_{H^{\frac{3}{2} + \varepsilon}}^{\frac{\delta}{\varepsilon}}, 
\end{equation*}
from where we obtain that
\begin{equation}\label{eq:H3/2+A_est_final}
\frac{1}{2} \ddt \norm{f}_{H^{\frac{3}{2} + \varepsilon}}^2 + \norm{\Lambda^{1/2} f }_{H^{\frac{3}{2} + \varepsilon}}^2 \leqslant C  \pare{ \norm{f}_{H^{3/2}} 
+  \frac{1}{\delta}   \norm{f}_{H^{3/2}}^{1-\frac{\delta}{\varepsilon}} \norm{f}_{H^{\frac{3}{2} + \varepsilon}}^{\frac{\delta}{\varepsilon}} } \norm{\Lambda^{1/2}f}_{H^{\frac{3}{2} + \varepsilon}}^2 . 
\end{equation}

\subsubsection*{Step 1.5: closing the estimates} In this section we conclude the high order energy estimates. We sum up \eqref{eq:H3/2_est_final} and \eqref{eq:H3/2+A_est_final} to find that
\begin{multline}\label{eq:sum_estimates_1}
\frac{1}{2} \ddt \pare{\norm{f}_{ H^{3/2} }^2 + \norm{f}_{H^{\frac{3}{2} + \varepsilon}}^2} + \pare{ \norm{\Lambda^{1/2} f }_{ H^{3/2}}^2+ \norm{\Lambda^{1/2} f }_{H^{\frac{3}{2} + \varepsilon}}^2} 
\\
\leqslant C  \pare{1+\frac{1}{\sqrt{\varepsilon}}} \norm{f}_{H^{3/2}}\norm{\Lambda^{1/2}f}_{ H^{3/2}}^2 
+ 
C \pare{ \pare{1+\frac{1}{\sqrt{\varepsilon}}} \norm{f}_{H^{3/2}} 
+ \frac{1}{\delta}   \norm{f}_{H^{3/2}}^{1-\frac{\delta}{\varepsilon}} \norm{f}_{H^{\frac{3}{2} + \varepsilon}}^{\frac{\delta}{\varepsilon}} } \norm{\Lambda^{1/2}f}_{H^{\frac{3}{2} + \varepsilon}}^2 . 
\end{multline}

We define
\begin{equation*}
\beta = \beta\pare{f_0} = \frac{\norm{f_0}_{H^{\frac{3}{2} + \varepsilon}}}{\norm{f_0}_{H^{\frac{3}{2}}}} > 1.
\end{equation*}

We want now to absorb the contribution of 
$$ 
\frac{1}{\delta}   \norm{f}_{H^{3/2}}^{1-\frac{\delta}{\varepsilon}} \norm{f}_{H^{\frac{3}{2} + \varepsilon}}^{\frac{\delta}{\varepsilon}}  \norm{\Lambda^{1/2}f}_{H^{\frac{3}{2} + \varepsilon}}^2 
$$ appearing in the right hand side of \eqref{eq:sum_estimates_1} in the parabolic term in the left hand side of the same estimate. In order to do so we must split the proof according to the explicit value of $ \beta $:
 \\

{\bf Case 1: $ \beta \leqslant 2 $.} In such setting we fix $ \delta =\varepsilon /2 $, hence $ \beta^{\delta/\varepsilon} / \delta \leqslant 2^{3/2} / \varepsilon $. Thus
\begin{equation}\label{eq:smallness_condition1}
\frac{1}{\delta}  \norm{f}_{H^{3/2}}^{1-\frac{\delta}{\varepsilon}} \norm{f}_{H^{\frac{3}{2} + \varepsilon}}^{\frac{\delta}{\varepsilon}} = \frac{1}{\delta} \norm{f}_{H^{\frac{3}{2}}} \beta^{\delta / \varepsilon} \leqslant \frac{2^{3/2}}{\varepsilon}  \norm{f}_{H^{\frac{3}{2}}} . 
\end{equation}
So, if the initial data is small enough in $H^{3/2}$, then \eqref{eq:smallness_condition1} is as small as needed in order to absorb the desired term.

{\bf Case 2: $ \beta > 2 $.} In such setting we can define
\begin{equation*}
\delta = \delta \pare{f_0} = \varepsilon \log_\beta 2 < \varepsilon. 
\end{equation*}
The value of $ \delta $ has been defined so that
\begin{equation*}
\beta^{\delta / \varepsilon} = 2.
\end{equation*}
We compute
\begin{equation*}
\frac{1}{\delta}  \norm{f}_{H^{3/2}}^{1-\frac{\delta}{\varepsilon}} \norm{f}_{H^{\frac{3}{2} + \varepsilon}}^{\frac{\delta}{\varepsilon}}=
\frac{1}{\delta} \norm{f_0}_{H^{\frac{3}{2}}} \beta^{\delta / \varepsilon}  = \frac{2  }{\varepsilon \log_\beta 2 } \ \norm{f_0}_{H^{\frac{3}{2} }}, 
\end{equation*}
We want to make sure that
$$ 
\frac{2 C  }{\varepsilon \log_\beta 2 } \ \norm{f_0}_{H^{\frac{3}{2}}} < 1/2. 
$$ 
Thus, simplifying the previous expression, we obtain that our goal is to prove that the following inequality holds
\begin{equation*}
\begin{aligned}
  \norm{f_0}_{H^{\frac{3}{2}}} \log \pare{ \frac{\norm{f_0}_{H^{\frac{3}{2}+\varepsilon}} }{\norm{f_0}_{H^{\frac{3}{2}}}}
} 
=  \norm{f_0}_{H^{\frac{3}{2}}}\pare{ \log \norm{f_0}_{H^{\frac{3}{2}+\varepsilon}}  -\log \norm{f_0}_{H^{\frac{3}{2}}}
} 
 < \frac{\varepsilon\log 2}{4C}  .
\end{aligned}
\end{equation*}
We will now use the algebraic inequality $ -x \log x \leq \sqrt{x},\ x\in\bra{0, 1} $ to obtain
\begin{equation*}
\begin{aligned}
\norm{f_0}_{H^{\frac{3}{2}}} \log \pare{ \frac{\norm{f_0}_{H^{\frac{3}{2}+\varepsilon}} }{\norm{f_0}_{H^{\frac{3}{2}}}}
} & =  \norm{f_0}_{H^{\frac{3}{2}}}\pare{ \log \norm{f_0}_{H^{\frac{3}{2}+\varepsilon}}  -\log \norm{f_0}_{H^{\frac{3}{2}}}
}, \\
  & \leq
\norm{f_0}_{H^{\frac{3}{2}}}^{1/2} \pare{\norm{f_0}_{H^{\frac{3}{2}+\varepsilon}}^{1/2} \log \norm{f_0}_{H^{\frac{3}{2}+\varepsilon}} +1}, \\
\end{aligned}
\end{equation*}
It is now clear that, under the assumptions of the statement, we have that
\begin{equation}
\label{eq:smallness_condition2}
\begin{aligned}
\norm{f_0}_{H^{\frac{3}{2}}}^{1/2} \pare{\norm{f_0}_{H^{\frac{3}{2}+\varepsilon}}^{1/2} \log \norm{f_0}_{H^{\frac{3}{2}+\varepsilon}} +1} < \frac{\varepsilon\log 2}{4C} .
\end{aligned}
\end{equation}
In other words, we can now collect the estimates \eqref{eq:smallness_condition1} and \eqref{eq:smallness_condition2} to obtain that if the initial data satisfies the condition
\begin{align*}
\norm{f_0}_{H^{\frac{3}{2}}} <
\min \set{1 \  , \frac{\varepsilon^2\log^2 2 }{16 C^2 \ \pare{\norm{f_0}_{H^{\frac{3}{2}+\varepsilon}} \log \norm{f_0}_{H^{\frac{3}{2}+\varepsilon}} +1}^2}  } , 
\end{align*}
then
\begin{equation*}
\frac{1}{\delta}   \norm{f}_{H^{3/2}}^{1-\frac{\delta}{\varepsilon}} \norm{f}_{H^{\frac{3}{2} + \varepsilon}}^{\frac{\delta}{\varepsilon}}  \norm{\Lambda^{1/2}f}_{H^{\frac{3}{2} + \varepsilon}}^2 < \frac{1}{2} \norm{\Lambda^{1/2}f}_{H^{\frac{3}{2} + \varepsilon}}^2.
\end{equation*}
Thus, due to the previous energy estimates, we can absorb the contribution from the nonlinear terms and conclude that
\begin{equation}\label{eq:est_H3/2}
 {\norm{f \pare{t} }_{ H^{3/2} }^2 + \norm{f\pare{t}}_{H^{\frac{3}{2} + \varepsilon}}^2} +\int_0^t \pare{ \norm{\Lambda^{1/2} f \pare{s} }_{ H^{3/2}}^2+ \norm{\Lambda^{1/2} f \pare{s}  }_{H^{\frac{3}{2} + \varepsilon}}^2} \dt \leqslant
 \norm{f _0 }_{ H^{3/2} }^2 + \norm{f_0}_{H^{\frac{3}{2} + \varepsilon}}^2. 
\end{equation}
The above equation implies hence that smooth solutions stemming from small $ H^{3/2} $ initial data are nonlinearly stable in  $H^{\frac{3}{2} } \cap H^{\frac{3}{2} + \varepsilon} $.

\subsubsection*{Step 1.6: estimates for the time derivative} The purpose of this section is to find estimates for the time derivative of the solution. In particular, we want to obtain the following
\begin{align}
 \label{prop:time_der_estimates}
&\int_0^t \norm{\partial_t f \pare{s} }_{ H^1}^2 ds
 \leqslant C \set{ 
\pare{\norm{f _0 }_{ H^{3/2} }^2 + \norm{f_0}_{H^{\frac{3}{2} + \varepsilon}}^2}\bra{ 1+ \pare{\norm{f _0 }_{ H^{3/2} }^2 + \norm{f_0}_{H^{\frac{3}{2} + \varepsilon}}^2} \pare{1+\frac{1}{\sqrt{\varepsilon}}}} + \norm{f _0 }_{ H^{3/2} }^3 + \norm{f_0}_{H^{\frac{3}{2} + \varepsilon}}^3
}.
\end{align}
We multiply \eqref{eq:AD0} by $ \partial_t \Lambda^2 f $ and integrate in $ \bT \times \pare{0, T} $ to obtain

\begin{equation}\label{eq:est_patf1}
\norm{\partial_t \Lambda f}_{L^2_T L^2}^2 + \pare{\norm{f\pare{t}}^2_{H^{\frac{3}{2}}} - \norm{f_0}^2_{H^{\frac{3}{2}}}} = 
I_1+I_2 .
\end{equation}
where
$$
I_1=\int_0^T\int_{\bT} \Lambda \pare{f\Lambda f} \partial_t \Lambda^2 f  \ \dx \dt,
$$
and
$$
I_2=\int_0^T\int_{\bT} \partial_x \pare{f\partial_x f} \partial_t \Lambda^2 f \ \dx \dt.
$$
Using the identity $ \Lambda^2 = -\partial_x^2 $ and integrating by parts we can deduce the following equalities
\begin{equation*}
\begin{aligned}
I_1 & = \int_0^T\int_{\bT} \partial_x f \Lambda f \ \partial_t \Lambda \partial_x f \ \dx\dt + \int_0^T\int_{\bT}  f \Lambda \partial_x f \  \partial_t \Lambda \partial_x f \ \dx\dt, \\
& = I_{1,1} +I_{1,2}, \\
I_2 & = - \int_0^T\int_{\bT}  \pare{\partial_x f}^2 \partial_t \partial_x^2 f \ \dx\dt - \int_0^T\int_{\bT}  f {\partial_x^2 f} \partial_t \partial_x^2 f \ \dx\dt, \\
& =  I_{2,1} +I_{2,2}. 
\end{aligned}
\end{equation*}

We hence obtained that

\begin{equation*}
\begin{aligned}
I_{1,2}+I_{2,2} & = \frac{1}{2} \int_0^T\int_{\bT} f \partial_t \bra{ \pare{\Lambda \partial_x f}^2 - \pare{\partial_x^2 f}^2} \dx \dt, \\
& = \left. \frac{1}{2} \  \cN \pare{f, \partial_x f, \partial_x f}  \Big.  \right|_0^T -  \frac{1}{2} \int_0^T \cN \pare{\partial_t f, \partial_x f, \partial_x f} \dt, \\
& = J_1 +J_2. 
\end{aligned}
\end{equation*}

The term $ J_1 $ can be estimated using \eqref{lem:commutation} and we deduce
\begin{equation}\label{eq:est_C1}
J_1 \lesssim \norm{f\pare{T}}_{H^{3/2}}^3 + \norm{f_0}_{H^{3/2}}^3 .
\end{equation}

The term $ J_2 $ can be estimated in a similar way. In particular, we obtain for any $ \epsilon > 0 $
\begin{equation}\label{eq:est_C2}
\begin{aligned}
J_2 & \lesssim \int_0^T \norm{\partial_t f }_{H^1} \norm{\Lambda^{3/2} f }_{L^4}^2 \dt, \\
& \lesssim \int_0^T \norm{\partial_t f }_{H^1} \norm{  f }_{H^{3/2}} \norm{\Lambda^{1/2} f }_{H^{3/2}} \dt, \\
& \leqslant \epsilon \norm{\partial_t f}_{L^2_T H^1}^2 + \frac{C}{\epsilon} \norm{f}_{L^\infty_T H^{3/2}}^2 \norm{ \Lambda^{1/2} f}_{L^2 _T H^{3/2}}^2 .
\end{aligned}
\end{equation}

We have to estimate the term $I_{1,1}+I_{2,1}$. We find that

\begin{equation}\label{eq:est_A1}
\begin{aligned}
I_{1,1} & =\int_0^T\int_{\bT} \partial_x f \Lambda f \ \partial_t \Lambda \partial_x f \ \dx\dt, \\
& \sim \int_0^T\int_{\bT} \Lambda^2 f \Lambda f \ \partial_t \Lambda f \ \dx\dt, \\
& \lesssim \norm{\Lambda^2 f}_{L^2_T L^{\frac{2p_\varepsilon}{p_\varepsilon-2}}} \norm{\Lambda f}_{L^\infty_T L^{p_\varepsilon}} \norm{\partial_t \Lambda f}_{L^2_T L^2}, \\
& \lesssim \frac{1}{\sqrt{\varepsilon}} \norm{\Lambda^{1/2} f }_{L^2_T H^{\frac{3}{2}+\varepsilon}} \norm{f}_{L^\infty_T H^{3/2}} \norm{\partial_t f }_{L^2_T H^1}, \\
& \leqslant \eta \norm{\partial_t f }_{L^2_T H^1}^2 + \frac{C}{\eta\sqrt{\varepsilon}} \norm{\Lambda^{1/2} f }_{L^2_T H^{\frac{3}{2}+\varepsilon}}^2 \norm{f}_{L^\infty_T H^{3/2}}^2,
\end{aligned}
\end{equation}
where $\eta>0$ is arbitrary. 
The term $I_{2,1}$ is completely analogous to $ I_{1, 1} $ and can be handled in a similar way and we obtain the estimate
\begin{equation*}
\begin{aligned}
I_{2,1} 
& \leqslant \eta \norm{\partial_t f }_{L^2_T H^1}^2 + \frac{C}{\eta\sqrt{\varepsilon}} \norm{\Lambda^{1/2} f }_{L^2_T H^{\frac{3}{2}+\varepsilon}}^2 \norm{f}_{L^\infty_T H^{3/2}}^2.
\end{aligned}
\end{equation*}
Using the estimates \eqref{eq:est_C1}, \eqref{eq:est_C2}, \eqref{eq:est_A1} in \eqref{eq:est_patf1} we obtain that
\begin{multline*}
\pare{1-2\eta} \norm{\partial_t f }_{L^2_T H^1}^2 \\
 \leqslant C \bra{ 
 \norm{f_0}^2_{H^{\frac{3}{2}}} - \norm{f\pare{T}}^2_{H^{\frac{3}{2}}} +
 \norm{f\pare{T}}_{H^{3/2}}^3 + \norm{f_0}_{H^{3/2}}^3
+ \frac{1}{\eta} 
\pare{
\norm{ \Lambda^{1/2} f}_{L^2 _T H^{3/2}}^2 + \frac{1}{\sqrt{\varepsilon}}\norm{\Lambda^{1/2} f }_{L^2_T H^{\frac{3}{2}+\varepsilon}}^2
}\norm{f}_{L^\infty_T H^{3/2}}^2
 } .
\end{multline*}
We fix $ \eta = 1/4 $ and use \eqref{eq:est_H3/2} in order to deduce the bound

\begin{equation*}
\frac{\norm{\partial_t f }_{L^2_T H^1}^2}{2} \leqslant C \set{ 
\pare{\norm{f _0 }_{ H^{3/2} }^2 + \norm{f_0}_{H^{\frac{3}{2} + \varepsilon}}^2}\bra{ 1+ \pare{\norm{f _0 }_{ H^{3/2} }^2 + \norm{f_0}_{H^{\frac{3}{2} + \varepsilon}}^2} \pare{1+\frac{1}{\sqrt{\varepsilon}}}} + \norm{f _0 }_{ H^{3/2} }^3 + \norm{f_0}_{H^{\frac{3}{2} + \varepsilon}}^3
}.
\end{equation*}

\subsubsection*{Step 2: Approximated solutions and passing to the limit} Using the apriori estimates provided before the proof of Theorem \ref{thm:GWP_H3/2} become a standard approximation argument which we outline here. Let us define the truncation operator
\begin{equation}\label{truncation}
\cJ_n u \pare{x} = \frac{1}{\sqrt{2\pi}}\sum_{\av{k}\leqslant n} \hat{u}_k e^{i \ k\cdot x} = \cF^{-1}\pare{\pare{1_{\set{\av{k}\leqslant n}}\pare{k} \hat{u}_k}_k}. 
\end{equation}
We apply this truncation operator to the initial data $f_0$. Then, this truncated initial data is analytic and, invoking Theorem \ref{thm:analytic}, we obtain the existence of smooth (in time and space) local solution approximate solutions. We can use the previous estimates \eqref{eq:est_H3/2} to deduce that the local approximate solutions are actually globally defined as $H^{3/2+\varepsilon}$ functions. The sequence  of approximate solutions $ \pare{f_n}_n $ is bounded uniformly in $n$ and $T$ in the spaces 
$$ 
L^\infty\pare{\left[ 0, T \right); H^{\frac{3}{2}}\cap H^{\frac{3}{2}+\varepsilon}} \cap L^2\pare{\left[ 0, T \right); H^{2} \cap H^{2+\varepsilon}}, 
$$
so there exists a weak limit
\begin{equation*}
f\in L^\infty \pare{\left[0, T \right) ; \ H^{\frac{3}{2}+ \varepsilon}} \cap  L^2 \pare{0, T ; \ H^{2+ \varepsilon}}. 
\end{equation*} 
Furthermore, estimate \eqref{prop:time_der_estimates} ensures that the sequence fo approximate solutions $ \pare{f_n}_n $ is uniformly bounded in 
$$ 
H^1\pare{\left[0, T\right); H^1},
$$ 
from where, using interpolation, we deduce that
 \begin{equation*}
 \pare{f_n}_n \text{ is uniformly bounded in } H^{1-\vartheta}\pare{\left[0, T\right) ; H^{1+\vartheta\pare{\varepsilon+1}}}, \ \vartheta\in \bra{0, 1}. 
 \end{equation*}
 Setting 
 \begin{equation}\label{eq:relation_vartheta}
   \vartheta  \in \left[ \frac{1}{2\pare{\varepsilon+1}} \ , \ \frac{1}{2} \ \right), 
 \end{equation}
 we deduce that
 \begin{equation*}
 \pare{f_n}_n \text{ is uniformly bounded in } C^{0, \frac{1}{2} -\vartheta} \pare{\left[0, T\right) ; H^{1+\vartheta\pare{\varepsilon+1}}}.
 \end{equation*}
We obtain that $ \pare{f_n}_n $ is equicontinuous-in-time. We can hence apply Ascoli-Arzel\'a theorem in order to deduce that there exists a $ f\in  C^{0, \frac{1}{2} -\vartheta} \pare{\left[0, T\right) ; H^{1+\vartheta\pare{\varepsilon+1}}}$ such that, taking a subsequence if necessary, $ f_n\to f $ in $C^{0, \frac{1}{2} -\vartheta} \pare{\left[0, T\right) ; H^{1+\vartheta\pare{\varepsilon+1}}} $ as long as $ \vartheta $ satisfies \eqref{eq:relation_vartheta}. It is a classical argument to prove that $ f $ solves \eqref{eq:AD0} and that it is unique.
 
\section{Gravity-capillarity driven system \eqref{eq:AD}}
\subsection{Proof of Theorem \ref{thm:GWP_A1surf}} 

\subsubsection*{Step 1: a priori estimates} We only prove the a priori estimates, since the approximation procedure is standard and it can be done using a Galerkin scheme. We have that
$$
\ddt \norm{f(t)}_{\dot{A}^1}+\nu\norm{f(t)}_{A^4}+\norm{f(t)}_{A^2}\leqslant\norm{\partial_{x}^2 \left(\comm{\mathcal{H}}{f}(\nu\Lambda^3f+\Lambda f)\right)}_{A^0}.
$$
As before,
\begin{align*}
\norm{\partial_{x}^2 \left(\comm{\mathcal{H}}{f}\Lambda f\right)}_{A^0}&\leqslant 2\norm{f}_{A^1}\norm{f}_{A^2}.
\end{align*}
In Fourier variables, we have that
\begin{align*}
\widehat{\partial_{x}^2 \left(\comm{\mathcal{H}}{f}\Lambda^3 f\right)}&=\widehat{\partial_x \Lambda(f\Lambda^3 f)-\partial_x^2(f\partial_x^3 f)}\\
&=\hat{f}(k-m)\hat{f}(m)p(k,m),
\end{align*}
with
\begin{align*}
p(k,m)&=ik\left(|k||k-m|^3-k(k-m)^3\right)=ik|k||k-m|^3\left(1-\frac{k(k-m)^3}{|k||k-m|^3}\right)
\end{align*}
Again $p\neq0$ if and only if $0<|k|<|m|$. We find that
$$
|p(k,m)|\leqslant 2|k||k||k-m|^3\leq 2|m|^2|k-m|^3.
$$
Thus, using the interpolation inequality (valid for $0\leq r\leq 3$)
$$
\|u\|_{A^r}\leqslant \|u\|_{A^0}^{1-r/3}\|u\|_{A^3}^{r/3},
$$
we find that 
\begin{align*}
\norm{\partial_{x}^2 \left(\comm{\mathcal{H}}{f}\Lambda^3 f\right)}_{A^0}&\leqslant 2\nu\norm{f}_{A^1}\norm{f}_{A^4}.
\end{align*}
As a consequence, we find the inequality
$$
\ddt \norm{f(t)}_{\dot{A}^1}+\nu\norm{f(t)}_{A^4}+\norm{f(t)}_{A^2}\leqslant 2\norm{f}_{A^1}\norm{f}_{A^2}+2\nu\norm{f}_{A^1}\norm{f}_{A^4},
$$
and we conclude the result.

\subsubsection*{Step 2: Decay} We have that
$$
\ddt \norm{f(t)}_{A^0}+\nu\norm{f(t)}_{A^3}+\norm{f(t)}_{A^1}\leqslant 2\norm{f}_{A^1}^2+2\nu\norm{f}_{A^1}\norm{f}_{A^3},
$$
thus,
$$
\ddt \norm{f(t)}_{A^0}+(\nu\norm{f(t)}_{A^3}+\norm{f(t)}_{A^1})\left(1-2\norm{f}_{A^1}\right)\leqslant 0.
$$
From here we conclude using a Poincar\'e type inequality.

\subsection{Proof of Theorem \ref{thm:GWP_H2}}
\subsubsection*{Step 1: a priori estimates} 
\subsubsection*{ Step 1.1: $ L^2 $ estimates.}
For the sake of clarity, we first perform $ L^2 $ energy estimates for \eqref{eq:AD} and we study the particular commutation properties of the nonlinearity of \eqref{eq:AD}. We multiply \eqref{eq:AD} by $ f $ and integrate in $ \bT $ to deduce the following 
\begin{equation*}
\frac{1}{2} \ddt \norm{f\pare{t}}_{L^2}^2 + \nu \norm{ \Lambda^{3/2} f\pare{t}}_{L^2}^2 + \norm{\Lambda^{1/2} f\pare{t}}_{L^2}^2 = \cN\pare{f, f, f} + \cM\pare{f, f, f}, 
\end{equation*}
where $ \cN $ is defined in \eqref{eq:def_cN} and $ \cM $ is defined as
\begin{equation}
\label{eq:def_cM}
\cM \pare{g_1, g_2, g_3} = \nu \int _{\bT} g_1 \pare{\Lambda^3 g_2 \  \Lambda g_3 + \partial_x^3 g_2 \ \partial_x g_3} \ \dx. 
\end{equation}

Our goal now is to prove that
\begin{equation}\label{lem:commutation_cM}
\av{\cM\pare{g, h, h}} \leqslant C \nu \norm{g}_{H^2\pare{\bT}}\norm{h}_{H^{1/2}\pare{\bT}}^{1/2} \norm{h}_{H^{3/2}\pare{\bT}}^{3/2}. 
\end{equation}
Indeed, using Pancherel theorem, we can write
\begin{equation}\label{eq:def_cM_fourier}
\cM \pare{g, h, h} = \nu \sum_{n, k} \overline{\hat{g}_n} \ \hat{h}_{n-k} \hat{h}_k \ \pare{n-k}^3 k \pare{\sgn\pare{n-k}\ \sgn k +1}. 
\end{equation}
As it was done in the proof of \eqref{lem:commutation} we observe that the non-zero contributions arise in the set 
$$ 
0 < k < n \quad \text{ or } \quad n < k < 0. 
$$ 
Thus, taking advantage of this symmetry,
\begin{equation*}
\cM \pare{g, h, h} = 4\nu \sum_{0<k<n}  \pare{n-k}^3 k \  \textnormal{Re}  \pare{\overline{\hat{g}_n} \ \hat{h}_{n-k} \hat{h}_k} , 
\end{equation*}
which entails the inequality
\begin{equation*}
\av{\cM \pare{g, h, h} } \leqslant 4 \sum_{0 < k < n} \pare{n-k}^3 k \ \av{\hat{g}_n} \av{\hat{h}_{n-k}} \av{\hat{h}_k}. 
\end{equation*}
Since the summation set is localized in the Fourier modes $ 0<k<n $ we can deduce the inequality 
$$ 
\pare{n-k}^3 k < n^2 \ \pare{n-k} k,
$$
thus, we can estimate as follows:
\begin{equation*}
\begin{aligned}
\av{\cM \pare{g, h, h} } & \leqslant C \norm{g}_{H^2}\norm{h}_{\dot{W}^{1, 4}}^2, \\
& \leqslant C \norm{g}_{H^2}\norm{h}_{H^{5/4}}^2, \\
& \leqslant C \norm{g}_{H^2}\norm{h}_{H^{1}}\norm{h}_{H^{3/2}}, \\
& \leqslant C \norm{g}_{H^2}\norm{h}_{H^{1/2}}^{1/2}\norm{h}_{H^{3/2}}^{3/2}. 
\end{aligned}
\end{equation*}
Equipped with \eqref{lem:commutation_cM}, we can hence conclude the proof of the $L^2$ estimates. In fact using Young inequality we deduce that
\begin{equation*}
\av{\cM\pare{f, f, f}} \leqslant \alpha \ \norm{f}_{H^{1/2}}^2 + \frac{C\nu^{4/3} }{\alpha}\norm{f}_{H^2}^{4/3}\norm{f}_{H^{3/2}}^2.
\end{equation*}
Using \eqref{lem:commutation} and the continuous embedding of $ H^2 $ into $ H^{3/2} $ we obtain that
\begin{equation*}
\begin{aligned}
\av{\cN\pare{f, f, f}} & \leqslant C \norm{f}_{H^{3/2}} \norm{f}_{H^{1/2}}^2, \\
& \leqslant C \norm{f}_{H^{2}} \norm{f}_{H^{1/2}}^2.
\end{aligned}
\end{equation*}
From here, taking $ \alpha =1/4 $ and if 
\begin{equation*}
\begin{aligned}
\norm{f\pare{t}}_{H^2} \leqslant \frac{1}{C} \ \min \set{1, \nu^{-\frac{1}{4}}} && \textnormal{for all } t \in \bra{0, T}, 
\end{aligned}
\end{equation*}
we deduce that
\begin{equation*}
\av{\cN\pare{f, f, f}} + \av{\cM \pare{f, f, f}} \leqslant \frac{\nu}{2}\norm{f}_{H^{3/2}}^2 + \frac{1}{2}  \norm{f}_{H^{1/2}}^2, 
\end{equation*}
lead us to
\begin{equation}\label{lem:commutation_nu>0}
\ddt \norm{f \pare{t} }_{L^2}^2 + \int_0^t \pare{\Big. \nu \norm{\Lambda^{3/2} f\pare{s}}_{L^2}^2 +  \norm{\Lambda^{1/2} f\pare{s}}_{L^2}^2} \dt  \leqslant \norm{f_0}_{L^2}^2. 
\end{equation}

\subsubsection*{ Step 1.2: $ H^2 $ estimates.}
We want now to prove global $ H^2 $ estimates for \eqref{eq:AD} stemming from small initial data. We apply the dyadic truncation $ \tqS $ to the left of \eqref{eq:AD}, we multiply the resulting equation for $ \tqS f $ and integrate in $ x $ obtaining
\begin{equation}\label{eq:H2_dyadic_estimate}
\frac{1}{2} \ddt \norm{\tqS f}_{L^2}^2 +\nu \norm{\tqS \Lambda^{3/2} f }_{L^2}^2 + \norm{\tqS \Lambda^{1/2} f }_{L^2}^2 = A_q-B_q+C_q+D_q,
\end{equation}
where
$$
A_q=\int \tqS \pare{f \ \Lambda f} \tqS \Lambda f \dx,
$$
$$
B_q=\int\tqS \pare{f \ \partial_x f }\tqS \partial_x f \ \dx,
$$
$$
C_q=\nu \int \tqS \pare{f \ \Lambda^3 f} \tqS \Lambda f \dx,
$$
$$
D_q=\int\tqS \pare{f \ \partial_x^3 f }\tqS \partial_x f \ \dx.
$$
We can use the estimates performed before (see in particular the r.h.s. of \eqref{eq:Hs_est2}) in order to deduce
\begin{equation}\label{eq:H2_nonlinear_estimate_1}
\begin{aligned}
\av{A_q - B_q} & \leqslant C b_q 2^{-4q} \ \norm{f}_{H^{2}} \norm{\Lambda^{1/2} f}_{H^2}^2. 
\end{aligned}
\end{equation}
We can hence now focus on the purely nonlinear part which is characteristic of \eqref{eq:AD} when $ \nu > 0 $, i.e. $ -\nu \pare{C_q + D_q} $. We can use Bony decomposition \ref{bony decomposition asymmetric} in order to decompose $ C_q $ and $ D_q $ as
\begin{equation*}
\begin{aligned}
C_q & = T^C_{1, q}+ T^C_{2, q} + T^C_{3, q} + R^C_q, \\
D_q & = T^D_{1, q}+ T^D_{2, q} + T^D_{3, q} + R^D_q,
\end{aligned}
\end{equation*}
where
\begin{align*}
T^C_{1, q} & = \int S_{q-1} f \  \tqS \Lambda^3 f \ \tqS \Lambda f \ \dx 
&
 T^D_{1, q} & = \int S_{q-1} f \  \tqS \partial_x^3 f \ \tqS \partial_x f \ \dx \\
T^C_{2, q} & = \sumf \int \bra{\tqS, \SQ f}\triangle_{q'} \Lambda^3 f \  \tqS \Lambda f \ \dx, & 
T^D_{2, q} & = \sumf \int \bra{\tqS, \SQ f}\triangle_{q'} \partial_x^3 f \ \tqS \partial_x f \ \dx, \\
T^C_{3, q} & = \sumf \int \pare{\SQ - \Sq}f \ \tqS \triangle_{q'} \Lambda^3 f \ \tqS \Lambda f \ \dx, &
T^D_{3, q} & = \sumf \int \pare{\SQ - \Sq}f \ \tqS \triangle_{q'} \partial_x^3 f \ \tqS \partial_x f\ \dx,\\
R^C_q & = \sumi \int \tqS \pare{\tQS f \ S_{q'+2}\Lambda^3 f} \ \tqS \Lambda f \dx, &
R^D_q & = \sumi \int \tqS \pare{\tQS f \ S_{q'+2}\partial_x^3 f} \ \tqS \partial_x f \dx. 
\end{align*}

We remark now that
\begin{equation*}
-\nu \pare{T^C_{1, q} + T^D_{1, q}} = \cM \pare{S_{q-1} f, \tqS f. \tqS f}.
\end{equation*}
We can apply the estimate \eqref{lem:commutation_cM} and \eqref{regularity_dyadic} in order to deduce the bound
\begin{equation*}
\begin{aligned}
\av{\cM \pare{S_{q-1} f, \tqS f. \tqS f} } & \leqslant C \nu \norm{f}_{H^2} \norm{\tqS \Lambda^{1/2} f}_{L^2}^{1/2} \norm{\tqS \Lambda^{3/2} f}_{L^2}^{3/2}, \\
& \leqslant Cb_q 2^{-4q} \nu \norm{f}_{H^2} \norm{ \Lambda^{1/2} f}_{H^2}^{1/2} \norm{ \Lambda^{3/2} f}_{H^2}^{3/2}. 
\end{aligned}
\end{equation*}

Next we focus on the remainder terms. Using H\"older inequality, Sobolev embeddings, \eqref{regularity_dyadic} and interpolation of Sobolev spaces it is possible to deduce the following estimate
\begin{equation*}
\begin{aligned}
\av{R^C_q} & \leqslant C \sumi \norm{S_{q'+2}\Lambda^3 f}_{L^2} \norm{\triangle_{q'-1} f}_{L^4} \norm{\tqS \Lambda f}_{L^4}, \\
& \leqslant C\sumi \norm{S_{q'+2}\Lambda^2 f}_{L^2} \norm{\triangle_{q'-1} \Lambda f}_{L^4} \norm{\tqS \Lambda f}_{L^4}, \\
& \leqslant C c_q 2^{-4q} \norm{f}_{H^2}\norm{\Lambda^{5/4} f}_{H^{2}}^2 \sumi 2^{2\pare{q-q'}}c_{q'}, \\
& \leqslant C b_q 2^{-4q} \norm{f}_{H^2}\norm{\Lambda^{5/4} f}_{H^{2}}^2, \\
& \leqslant Cb_q 2^{-4q}  \norm{f}_{H^2} \norm{ \Lambda^{1/2} f}_{H^2}^{1/2} \norm{ \Lambda^{3/2} f}_{H^2}^{3/2}. 
\end{aligned}
\end{equation*}
In the above estimates we have that
\begin{equation*}
b_q = c_q \pare{ \pare{1_{p<4} 2^{2q'}} \star_{q'} c_{q'}}_q, 
\end{equation*}
which is $ \ell^2 $ as long as $ \pare{c_q}_q\in \ell^1 $. Similar computations holds for the term $ R^D_q $, from where we deduce that
\begin{equation*}
\av{R^D_q}\leqslant Cb_q 2^{-4q}  \norm{f}_{H^2} \norm{ \Lambda^{1/2} f}_{H^2}^{1/2} \norm{ \Lambda^{3/2} f}_{H^2}^{3/2}. 
\end{equation*}

Next we study the term $ T^C_{2, q} $. Using Lemma \ref{estimates commutator} we obtain
\begin{equation*}
\begin{aligned}
\av{T^C_{2, q}} & \leqslant \sumf 2^{-q}\norm{\Lambda f}_{L^\infty} \norm{\triangle_{q'} \Lambda^3 f}_{L^2} \norm{\tqS \Lambda f}_{L^2}. 
\end{aligned}
\end{equation*}
Now, since $ \norm{\triangle_{q'} \Lambda^3 f}_{L^2}\lesssim 2^{\frac{3q'}{2}} \norm{\triangle_{q'} \Lambda^{3/2} f}_{L^2} $ and as a consequence of $ 2^{q'}\sim 2^q $ for $ \av{q-q'}\leqslant 4 $ we find that
\begin{equation*}
\begin{aligned}
\av{T^C_{2, q}} & \leqslant C \sumf \norm{f }_{H^2} \norm{\triangle_{q'}\Lambda^{3/2}f}_{L^2}\norm{\tqS\Lambda^{3/2}f}_{L^2}, \\
& \leqslant C b_q 2^{-4q} \norm{f}_{H^2}\norm{\Lambda^{3/2}f}_{H^2}^2. 
\end{aligned}
\end{equation*}
We can apply the very same computations to $ T^D_{2, q} $, which give
\begin{equation*}
\begin{aligned}
\av{T^D_{2, q}} & \leqslant C b_q 2^{-4q} \norm{f}_{H^2}\norm{\Lambda^{3/2}f}_{H^2}^2. 
\end{aligned}
\end{equation*}
The terms $ T^C_{3, q}, \ T^D_{3, q} $ enjoy analog bounds ans are the overall more regular terms, being composed by elements localized in dyadic annuli, thus
\begin{equation}\label{eq:H2_nonlinear_estimate_2}
\nu \av{C_q + D_q} \leqslant C \nu b_q 2^{-4q} \bra{ \norm{f}_{H^2} \norm{ \Lambda^{1/2} f}_{H^2}^{1/2} \norm{ \Lambda^{3/2} f}_{H^2}^{3/2} + \norm{f}_{H^2}\norm{\Lambda^{3/2}f}_{H^2}^2 }. 
\end{equation}

Collecting \eqref{eq:H2_nonlinear_estimate_1} and \eqref{eq:H2_nonlinear_estimate_2} in \eqref{eq:H2_dyadic_estimate}, multiplying the resulting inequality for $ 2^{4q} $, summing up in $ q\in \bZ $ and applying Young convexity inequality we conclude the following inequality

\begin{equation*}
\frac{1}{2} \ddt \norm{f}_{H^2}^2 + \nu \norm{\Lambda^{3/2} f }_{H^2}^2 + \norm{\Lambda^{1/2} f }_{H^2}^2 \leqslant \pare{\frac{1}{4} + C\norm{f}_{H^2}}\norm{\Lambda^{1/2} f }_{H^2}^2 
+ C \pare{\nu \norm{f}_{H^2} + \nu ^{\frac{4}{3}}  \norm{f}_{H^2}^{\frac{4}{3}}  } \norm{\Lambda^{3/2} f }_{H^2}^2 .
\end{equation*}
From the previous inequality, if the initial data satisfies
\begin{equation}\label{eq:smallness_condition_H2}
\norm{f_0}_{H^2\pare{\bT}} \leqslant \frac{1}{C} \ \min \set{1, \ \nu^{-\frac{1}{4}}}, 
\end{equation}
we obtain the desired global bound
\begin{equation}\label{eq:H2_energy_estimate}
\norm{f\pare{t}}_{H^2\pare{\bT}}^2 + \int_0^t \bra{ \nu \norm{ \Lambda^{3/2} f\pare{s}}_{H^2\pare{\bT}}^2 + \norm{ \Lambda^{1/2} f\pare{s}}_{H^2\pare{\bT}}^2} \dt \leqslant \norm{f_0}_{H^2\pare{\bT}}^2. 
\end{equation}

\subsubsection*{Step 2: Approximated solutions and passing to the limit} The proof is again a standard approximation argument. Using the projection operator defined in \eqref{truncation}, we consider the approximated problems
\begin{equation}
\left\lbrace
\begin{aligned}
& \partial_t f_n + \nu \Lambda^3 f_n + \Lambda f_n =  -\cJ_n\bra{ \nu \bra{ \Big. \Lambda\pare{f_n\ \Lambda^3 f_n} - \partial_x \pare{f_n\ \partial_x^3 f_n}} + \bra{ \Big. \Lambda\pare{f_n\ \Lambda f_n} + \partial_x \pare{f_n\ \partial_x f_n}}}, \\
& \left. f_n \right|_{t=0} = \cJ_n f_0. 
\end{aligned}
\right. 
\end{equation}
We can now define the space
\begin{equation*}
\widetilde{\cH}_n = \set{u\in \cD' \ \left| \ u\in H^2, \ \textnormal{supp} \hat{u} \subset B_n\pare{0} \Big. \right. }. 
\end{equation*}
Using Cauchy-Lipschitz theorem and the estimate \eqref{eq:H2_energy_estimate} we deduce that if $ f_0 $ satisfies the smallness hypothesis \eqref{eq:smallness_condition_H2} then $ f_n\in C^1 \pare{\bR_+; \widetilde{\cH}_n} \cap L^2\pare{\bR_+; H^{\frac{7}{2}}} $. Moreover $ \pare{\partial_t f_n}_n $ is uniformly bounded in $ L^2\pare{\bR_+; H^{-N}} $ for $ N $ sufficiently large, and invoking Aubin-Lions lemma we deduce that
\begin{equation*}
\pare{f_n}_n \ \text{ is compact in } L^2\pare{\bra{0, T}; H^{\frac{7}{2}-\epsilon} }, \ \forall \ \epsilon, T >0. 
\end{equation*} 
Passing to a subsequence if necessary we obtain the convergence $ f_n\to f $ in  $ L^2\pare{\bra{0, T}; H^{\frac{7}{2}-\epsilon} } $ and the limit element $ f $ satisfies the energy inequality \eqref{eq:H2_energy_estimate}, so
\begin{equation*}
f\in C\pare{\bR_+; H^2}\cap L^2\pare{\bR_+; H^{\frac{7}{2}}}. 
\end{equation*}

\section*{Acknowledgments}
{The research of S.S. is supported by the Basque Government through the BERC 2018-2021 program and by Spanish Ministry of Economy and Competitiveness MINECO through BCAM Severo Ochoa excellence accreditation SEV-2017-0718 and through project MTM2017-82184-R funded by (AEI/FEDER, UE) and acronym "DESFLU".}

\appendix

\section{Elements of Littlewood-Paley theory.}\label{elements LP}

A tool that will be widely used all along the paper is the theory of Littlewood--Paley, which consists in doing a dyadic cut-off of the  frequencies. Let us define the (homogeneous)  truncation operators as follows:
\begin{align*}
\tv u= & \sum_{n\in\mathbb{Z}^3} \hat{u}_n \varphi \left(\frac{\left|\check{n}\right|}{2^q}\right) e^{i\check{n} \cdot x}, &\text{for }& q\in\bZ,
\end{align*}
where $u\in\mathcal{D}'\left(\mathbb{T}^3 \right)$ and  $\hat{u}_n$ are the Fourier coefficients of $u$. The function $\varphi$ is a  smooth function with compact support such that
\begin{align*}
 \text{supp}\;\varphi \subset& \;\mathcal{C}\left( \frac{3}{4},\frac{8}{3}\right),
\end{align*}
and such that for all $t\in\mathbb{R}$,
$$
\sum_{q\in\bZ} \varphi \left( 2^{-q}t\right)=1.
$$

Let us define further the low frequencies cut-off operator
$$
S_q u= \sum_{q'\leqslant q-1}\Tv u.
$$

The dyadic decomposition turns out to be very useful also when it comes to study the product between two distributions. We can in fact, at least formally, write for two distributions $u$ and $v$
\begin{align}\label{decomposition vertical frequencies}
u=&\sum_{q\in\mathbb{Z}}\tv u ; &
v=&\sum_{q'\in\mathbb{Z}}\Tv v;&
u\cdot v = & \sum_{\substack{q\in\mathbb{Z} \\ q'\in\mathbb{Z}}}\tv u \cdot \Tv v.
\end{align}

Paradifferential calculus is  a mathematical tool for splitting the above sum in three parts
$$
u\cdot v = T_u v+ T_v u + R\left(u,v\right),
$$
where
\begin{align*}
T_u v=& \sum_q S_{q-1} u\; \tqS v, &
 T_v u= & \sum_{q'} S_{q'-1} v \; \Tv u,&
 R\left( u,v \right) = & \sum_k \sum_{\left| \nu\right| \leqslant 1} \triangle_k  u\; \triangle_{k+\nu} v.
\end{align*}
The following almost orthogonality properties hold
\begin{align*}
\tqS \left( \Sq a \Tv b\right)=&0, & \text{if }& \left|q-q'\right|\geqslant 5,\\
\tqS \left( \Tv a \triangle_{q'+\nu}b\right)=&0, & \text{if }& q'< q-4,\; \left| \nu \right|\leqslant 1,
\end{align*}
and hence we will often use the following relation
\begin{equation*}
\begin{aligned}
\tqS\left( u\cdot v \right)= &\sum_{\left| q -q'\right| \leqslant 4} \tqS\left(S_{q'-1} v\; \Tv u\right) +
\sum_{\left| q -q'\right| \leqslant 4} \tqS\left(S_{q'-1} u\; \Tv v\right)+
\sum_{q'\geqslant q-4}\sum_{|\nu|\leqslant 1}\tqS\left(  \Tv a \triangle_{q'+\nu}b\right) ,\\
=& \sum_{\left| q -q'\right| \leqslant 4} \tqS\left(S_{q'-1} v \; \Tv u\right) + \sum_{q'>q-4} \tqS\left( S_{q'+2} u \Tv v\right).
\end{aligned}
\end{equation*}

In the paper \cite{chemin_lerner} J.-Y. Chemin and N. Lerner introduced the following decomposition which will  be very useful in our context
\begin{equation}
\label{bony decomposition asymmetric}
\tqS \left(uv\right) = S_{q-1} u\; \tqS v +\sum_{|q-q'|\leqslant 4} \left\lbrace\left[ \tqS, \SQ u \right]\Tv v + \left( \SQ u-\Sq u \right) \tqS\Tv v\right\rbrace
\\
 + \sum_{q'>q-4} \tqS \left(  S_{q'+2} v\;\Tv u\right),
\end{equation}
where the commutator $\left[\tqS, a\right]b$ is defined as
$$
\left[\tqS, a\right]b= \tqS \left( ab \right) - a \tqS b.
$$
There is an interesting relation of regularity between dyadic blocks and full function in the Sobolev spaces, i.e.
\begin{equation}
\label{regularity_dyadic}
\left\| \tqS f \right\|_{L^p\pare{\bT}} \leqslant C c_q^{\pare{p}} 2^{-qs}\left\| f \right\|_{W^{s, p}\pare{\bT}},
\end{equation}
with $ \left\| \left\lbrace c_q^{\pare{p}} \right\rbrace_{q\in\mathbb{Z}} \right\|_{\ell^p\left( \mathbb{Z} \right)}\equiv 1 $, if $ p=2 $ we denote $ \set{c_q^{\pare{2}}}_q = \set{c_q}_q $ for simplicity. In the same way we denote as $ b_q $ a sequence in $ \ell^1 \left( \mathbb{Z} \right) $ such that $ \sum_q \left| b_q \right| \leqslant 1$.\\

Finally we state a lemma that shows that the commutator with the dyadic block in the vertical frequencies is a regularizing operator. The proof of such lemma can be found in \cite{paicu_NS_periodic}.
\begin{lemma}\label{estimates commutator}
Let $\mathbb{T}^d$ be a $ d $-dimensional torus and $p,r,s$ real positive numbers such that $ p,r,s\in\bra{1, \infty} $ and $\frac{1}{p}=\frac{1}{r}+\frac{1}{s}$. There exists a constant $C$ such that for all vector fields $u$ and $v$ on $\mathbb{T}^d$ we have the inequality
$$
\left\| \left[ {\tqS} , u\right] v \right\|_{L^p}\leqslant C  2^{-q}\left\|\nabla u \right\|_{L^r} \left\|v\right\|_{L^s}.
$$
\end{lemma}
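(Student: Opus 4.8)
The plan is to establish the commutator estimate directly, by representing $\triangle_q$ as convolution against a rescaled Littlewood--Paley kernel and exploiting a first-order Taylor expansion of $u$, so that the factor $2^{-q}$ is generated by the first moment of that kernel. Recall that on $\bT^d$ one has $\triangle_q g = h_q * g$ with
$$
h_q(x) = \sum_{n\in\bZ^d} \varphi\!\pare{2^{-q}\av{n}}\, e^{i n\cdot x},
$$
and that, up to periodization, $h_q = 2^{qd}\, h(2^q\,\cdot)$ where $\widehat h = \varphi$ is a Schwartz function. The only quantitative facts I need are the scale-invariant bounds $\norm{h_q}_{L^1(\bT^d)} \lesssim 1$ and $\norm{\av{\cdot}\, h_q}_{L^1(\bT^d)} \lesssim 2^{-q}$; these follow from the change of variables $y = 2^q x$, which turns $\int \av{x}\,\av{h_q(x)}\,\textnormal{d}x$ into $2^{-q}\int \av{y}\,\av{h(y)}\,\textnormal{d}y < \infty$. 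I would treat these as routine.

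Next I would write out the commutator. Since $\triangle_q(uv)(x) = \int_{\bT^d} h_q(x-y)\, u(y)\, v(y)\,\textnormal{d}y$ and $u(x)\,\triangle_q v(x) = \int_{\bT^d} h_q(x-y)\, u(x)\, v(y)\,\textnormal{d}y$, subtraction yields
$$
\left[\triangle_q, u\right] v(x) = \int_{\bT^d} h_q(x-y)\,\pare{u(y) - u(x)}\, v(y)\,\textnormal{d}y .
$$
Writing $u(y) - u(x) = \int_0^1 (y-x)\cdot\nabla u\pare{x + \tau(y-x)}\,\textnormal{d}\tau$ and substituting $z = x - y$, this becomes
$$
\left[\triangle_q, u\right] v(x) = -\int_0^1\!\int_{\bT^d} \pare{z\, h_q(z)} \cdot \nabla u(x - \tau z)\, v(x - z)\,\textnormal{d}z\,\textnormal{d}\tau .
$$

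Then I would estimate. For each fixed $\tau \in [0,1]$, Minkowski's integral inequality, followed by H\"older's inequality with $\frac{1}{p} = \frac{1}{r} + \frac{1}{s}$ and the translation invariance of the $L^r$ and $L^s$ norms, gives
$$
\norm{\int_{\bT^d} \pare{z\, h_q(z)} \cdot \nabla u(\cdot - \tau z)\, v(\cdot - z)\,\textnormal{d}z}_{L^p} \leqslant \norm{\av{\cdot}\, h_q}_{L^1}\,\norm{\nabla u}_{L^r}\,\norm{v}_{L^s} ;
$$
integrating over $\tau \in [0,1]$, which costs nothing, and inserting $\norm{\av{\cdot}\,h_q}_{L^1} \lesssim 2^{-q}$ produces $\norm{\left[\triangle_q, u\right] v}_{L^p} \lesssim 2^{-q}\,\norm{\nabla u}_{L^r}\,\norm{v}_{L^s}$, which is the assertion.

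The only point that requires genuine care is the periodic framework: on $\bT^d$ the kernel $h_q$ is the periodization of the rescaled Schwartz function $2^{qd} h(2^q\,\cdot)$, so one must check that both the $L^1$ bound and the first-moment bound survive periodization \emph{uniformly} in $q \in \bZ$. This is exactly where the annular support $\operatorname{supp}\varphi \subset \mathcal{C}(3/4, 8/3)$ is used: for $q$ large the periodization error decays faster than any power of $2^{-q}$, and for the finitely many remaining scales the bounds are immediate. Beyond this bookkeeping the argument is the standard Bony-type commutator estimate, the gain $2^{-q}$ being precisely the one-derivative smoothing one expects from commuting $\triangle_q$ with multiplication by $u$.
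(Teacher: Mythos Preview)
Your argument is correct and is precisely the standard proof of this commutator estimate: write $\triangle_q$ as convolution with a rescaled kernel, expand $u(y)-u(x)$ to first order, and close with Minkowski plus H\"older; the factor $2^{-q}$ comes from the first moment of the kernel. The periodic bookkeeping you flag is the only subtlety, and you handle it appropriately (in fact on $\bT^d$ one has $\triangle_q = 0$ for $q$ sufficiently negative since the annulus $2^{-q}\av{n}\in\mathcal{C}(3/4,8/3)$ contains no nonzero lattice point, so only the large-$q$ regime really needs checking).

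As for comparison with the paper: the paper does \emph{not} actually prove this lemma. It is stated in the appendix and the proof is deferred to the reference \cite{paicu_NS_periodic} (Paicu, \emph{Comm.\ Partial Differential Equations} 2005). Your proof is essentially the argument one finds there (and in the standard references, e.g.\ Bahouri--Chemin--Danchin), so there is no meaningful divergence in approach---you have simply supplied what the paper outsources.
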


\begin{footnotesize}
\bibliographystyle{plain}

\end{footnotesize}

\end{document}